\newcommand \Prob {\text{Prob}}
\newcommand \loc {\text{loc}} 
\newtheorem{structure}{Assumption}
\newtheorem{theorem}{Theorem}[section]
\newtheorem{proposition}[theorem]{Proposition}
\newtheorem{lemma}[theorem]{Lemma}
\newtheorem{corollary}[theorem]{Corollary}
\newtheorem{definition}[theorem]{Definition}
\newtheorem{remark}[theorem]{Remark} 
\numberwithin{equation}{section} 
\numberwithin{figure}{section}
\newcommand \dVbound {d\overline V}
 \newcommand \Tcal {\mathcal T}
\newcommand \widehatg {\widehat g}
\newcommand \widehatt {\widehat t}
\newcommand \hatu {\widehat u}
\newcommand \hatv {\widehat v}
\newcommand \Mcal {\mathcal M} 
\newcommand \trianglerightNEW \triangleright
\newcommand \delh {\widehat \del} 
\newcommand \uh {\widehat u} 
\newcommand \vh {\widehat v}
\newcommand \qt {\widetilde q}
\newcommand \auth {\textsc}
\newcommand \bei {\begin{itemize}}
\newcommand\eei {\end{itemize}}
\newcommand \be {\begin{equation}}
\newcommand \bel {\begin{equation}\label}
\newcommand\ee {\end{equation}}
\newcommand \del \partial
\newcommand \RR {\mathbb R}
\newcommand \eps \epsilon 
\let\oldmarginpar\marginpar
\renewcommand\marginpar[1]{\-\oldmarginpar[\raggedleft\footnotesize #1]%
{\raggedright\footnotesize #1}}
\begin{document}

\title{\bf \Large The finite volume method on a Schwarzschild background}

\author{Shijie Dong$^*$ and Philippe G. LeFloch\footnote{
\normalsize Laboratoire Jacques-Louis Lions, Centre National de la Recherche Scientifique, Sorbonne Universit\'e, 
4 Place Jussieu, 75252 Paris, France. 
\newline
Email : {\sl dongs@ljll.math.upmc.fr, contact@philippelefloch.org}
\newline AMS classification: 35L60, 65M05, 76L05. 
{\sl Keywords and Phrases.} Hyperbolic conservation law; Schwarzschild black hole;
 weak solution;  finite volume scheme; convergence analysis. 
}}

\date{January 2019}

\maketitle

\abstract
We introduce a class of nonlinear hyperbolic conservation laws on a Schwarz\-schild black hole background and derive several properties satisfied by (possibly weak) solutions. Next, we formulate a numerical approximation scheme which is based on the finite volume methodology and takes the curved geometry into account. An interesting feature of our model is that no boundary conditions is required at the black hole horizon boundary. We establish that this scheme converges to an entropy weak solution to the initial value problem and, in turn, our analysis also provides us with a theory of existence and stability for a new class of conservation laws. 
\endabstract

\vfill

\setcounter{tocdepth}{1}
\tableofcontents

\vfill 
  \newpage
 

\section{Introduction} 
\label{section1}

We design and study a finite volume scheme for a class of nonlinear hyperbolic equations posed on a Schwarzschild black hole background. This paper is the follow-up of earlier investigations by LeFloch and co-authors \cite{BL,LM,LX1,LX2}. As is common in the mathematical theory of hyperbolic balance laws, we consider a (drastically) simplified version of the compressible Euler equations and we describe the fluid evolution by a single scalar unknown function, typically representing the velocity of the fluid. 
For relativistic problems the velocity is naturally bounded and, after normalization, we seek for solutions 
\be
v : \Mcal \to [-1, 1]
\ee
defined on a ``spacetime'' $\Mcal$ ---explicitly described below in a global coordinate chart\footnote{so that the present paper is aimed at a reader interested in the discretization of nonlinear hyperbolic equation with variable coefficients.}--- 
and satisfying the following hyperbolic balance law  
\bel{eq:geometric1}
\nabla_\alpha \big( X^\alpha(v, \cdot) \big) = q(v, \cdot) \quad \text{ in } \Mcal.
\ee 
Here, $X^\alpha=X^\alpha(w, \cdot)$ is the so-called flux vector field parametrized by the real variable 
$w \in [-1,1]$ and defined on $\Mcal$, while $q=q(w, \cdot)$ is a prescribed real-valued function. 
Structural conditions (even for smooth solutions, as specified later in this text) must be imposed on the vector field in order for the balance law to admit a well-posed initial value formulation.  

Our objectives in this paper are as follows: 

\bei 

\item Choosing $\Mcal$ to be (the outer domain of communication of) a Schwarzschild black hole, we introduce a class of hyperbolic balance laws \eqref{eq:geometric1} and formulate the associated initial value problem. We then seek for weak solutions $v: \Mcal \to [-1, 1]$ possibly containing shock waves which must satisfy a suitable entropy condition (discussed below). 

\item Next, we design a finite volume scheme that allows us to numerically approximate these weak solutions and we derive several fundamental properties of interest: maximum principle, entropy inequalities, etc. 
We establish the strong convergence of this scheme toward a weak solution of the initial value problem. 

\eei 

\noindent Our arguments are based on a generalization of DiPerna's theory of measure-valued solutions \cite{DiPerna}
and require us to cope with the effects of the curved black hole geometry.  

An outline of this paper is as follows. 
In Section \ref{sec:formulation} we introduce the class of hyperbolic equations of interest and provide a motivation from pressureless fluid dynamics. In Section \ref{sec:characteristic}, we analyze the geometry of the curved characteristics in the black hole geometry and the class of steady state solutions which represent a fluid at rest. In Section \ref{sec:interior}, we discuss an alternative choice of slicing and which illustrate how the balance gets transform under change of coordinates. In Section \ref{sec:FVM}, we introduce our finite volume scheme and state the convergence theory. The entropy inequalities satisfied by the weak solutions and their discrete version are also derived, and the proof of convergence is completed.


\section{Formulation based on the Schwarzschild coordinates}
\label{sec:formulation}

\subsection{The choice of coordinates}

\begin{subequations}
The domain of outer communication of a Schwarzschild black hole, 
denoted by $\Mcal$, can be described in the so-called Schwarzschild coordinates $x=(t,x^j)= (t,x^1, \ldots, x^n)$ in which the spacetime metric reads 
\bel{eq:101} 
g = - \Big(1 - {2 M \over r} \Big) \, dt^2 + \Big(1 - {2 M \over r} \Big)^{-1} dr^2 + r^2 \, g_{S^{n-1}}. 
\ee 
Here, the time variable $t$ and the radius $r$ defined by $r^2 := \sum_{j=1}^n (x^j)^2$ satisfy 
\be
t \in [0, + \infty), \qquad r \in (2M, +\infty). 
\ee
The light speed is normalized to unit while the parameter 
$M \in [0, +\infty)$
represents the mass of the black hole. 
Moreover, $g_{S^2}$ denotes the canonical metric on the unit $(n-1)$-sphere $S^{n-1} \subset \RR^n$.
The spacetime hypersurface 
\be
\big\{ r=2M \big\} \subset \overline \Mcal 
\ee
is the boundary of our spacetime and 
represents the horizon of the black hole, from which nothing can propagate in the (outer communication) domain $r>2M$ of interest. Recall that the apparent singularity at $r=2M$ in the expression of the metric \eqref{eq:101} is not a physical singularity but is solely due to our choice of coordinates. 

\end{subequations}

\begin{remark}
Passing to the so-called Eddington-Finkelstein coordinates would allow us to eliminate this singularity, but at the expense of adding further complexity in the algebraic expressions. Fortunately, the coordinates in \eqref{eq:101}  are suitable for our purpose of analyzing the dynamics of a fluid outside the horizon. 
See Section \ref{sec:interior} for a different choice of coordinates. 
\end{remark}


\subsection{The model of interest}

Choosing the vector field in the left-hand side of the balance law \eqref{eq:geometric1} to be 
$$
X= \Big({1 \over \sqrt{\det (g)}}
{v \over (1- {2 M \over r})^2}, {1 \over \sqrt{\det (g)}} {f(v)\over {1- {2 M \over r}}}, 0, \ldots, 0 \Big)
$$ 
and the source term to be
$$
q(v, x)= {2 M \over r^2 \Big( 1 - {2 M \over r} \Big)^2} h(v), 
$$
we arrive at the following hyperbolic balance law:  
\bel{eq:Rmodel}
\del_t \Bigg({v \over (1- {2M \over r})^2}\Bigg) + \del_r \Bigg({f(v)\over 1- {2M \over r}} \Bigg)
= {2 M \over r^2 \Big(1 - {2 M \over r}\Big)^2} h(v). 
\ee
Here, the functions $f=f(w)$ and $h=h(w)$ are prescribed functions, while the unknown scalar field  is $v: \RR_+ \times \Omega \mapsto [-1,1]$, defined for all $t \geq 0$ and $r \geq 2M$, and we work in the exterior of the ball with radius $2M$, that is 
\be
\Omega := \big \{ r >2M \big\} \subset \RR^n. 
\ee

In our model the unknown $v$ need not be spatially symmetric, so it convenient to rewrite \eqref{eq:Rmodel} in Cartesian coordinates, i.e. 
\bel{eq:Cmodel}
\del_t \Bigg({v \over \Big(1- {2M \over r} \Big)^2}\Bigg)
+ \del_j \Bigg( {x^j \over r \Big(1- {2M \over r}\Big)} f(v)  \Bigg) 
- {(n-1) \over r \Big(1- {2M \over r} \Big)} f(v) 
= {2 M  \over r^2 \Big(1 - {2 M \over r} \Big)^2} \,  h(v).
\ee
Finally, in order to eliminate the singularity ${1\over 1- {2M /  r}}$, we propose an equivalent form, as follows. 

\begin{definition} The equation with unknown $v: \RR_+ \times \Omega \mapsto [-1,1]$
\bel{eq:Nmodel}
\aligned
& \del_t v+ \del_j \Big( \Big(1-{2M \over r} \Big) {x^j \over r} f(v) \Big) = g(v, r), 
\\
& g(v, r) := 
\del_j \Big( \Big(1- {2M \over r} \Big) {x^j \over r} \Big) f(v) 
+ {2 M \over r^2} \big( f(v) +  h(v) \big) 
\endaligned
\ee
is referred to as a {\bf hyperbolic balance law on a Schwarzschild black hole}. 
\end{definition}

 
At this juncture, it should be emphasized that further conditions (presented in Section \ref{sec:characteristic})
will be required on the flux function $f$ in order for the interval $[-1, 1]$ to be an invariant domain. 

\begin{definition} A pair of functions $(U, F): [-1,1] \to \RR \times \RR$ is called a {\bf convex entropy pair} for the equation \eqref{eq:Nmodel} if the function $v \in [-1,1] \mapsto U(v)$ is convex and 
\bel{eq:entropy} 
F' (v) =   f'(v) U'(v), \qquad v \in [-1,1]. 
\ee
\end{definition} 

We always tacitly assume that an entropy $U$ is normalize to satisfy $U(0) = 0$.
Then, by definition an entropy solution to the equation \eqref{eq:Nmodel} must satisfy, for all convex entropy pair $(U, F)$, 
\be 
\del_t U(v) + \Big(1-{2M \over r} \Big) {x^j \over r} \del_j F (v)
\leq U'(v) {2 M \over r^2} \Big(f(v) +h(v)\Big).
\ee
We prescribe an initial data $v_0$ at the time $t=0$, that is, 
\bel{eq:initial} 
v(0, \cdot) = v_0
\ee
and we  formalize our notion of solution as follows. 

\begin{definition}\label{def-entropy} 
Given a measurable function $v_0: \Omega \to [-1,1]$, a
 measurable function $v: \RR_+ \times \Omega \mapsto [-1,1]$ is called an {\bf entropy solution} to the Cauchy problem \eqref{eq:Nmodel} and \eqref{eq:initial} if the following inequalities hold
\bel{entropy-ineq2} 
\aligned
& \int_0^{+ \infty} \int_\Omega 
\Big( U(v) \del_t \phi + F(v) \del_j \big( \Big(1-{2M \over r} \Big) {x^j \over r} \phi \big) 
+ U'(v) {2 M \over r^2} \Big(f(v) +h(v)\Big)  \phi \Big) \, dxdt
\\
& 
 + \int_\Omega U(v_0) \phi(0, \cdot) \, dx \geq 0
\endaligned
\ee 
for all convex entropy pairs $(U, F)$ and all compactly supported test-functions $\phi \geq 0$. 
\end{definition}
 

\subsection{Derivation from the relativistic Euler system} 
\label{sub:moti}

Our motivation for introducing the above class of balance laws comes from a formal derivation 
made from the Euler equations for a relativistic compressible fluid, which read 
\bel{eq:Euler}
\nabla_\alpha \big( T^{\alpha\beta}(\rho,u) \big) = 0,
\ee
in which $\nabla$ denotes the Levi-Civita connection associated with the Schwarzschild metric \eqref{eq:101}. We are interested here in the energy-momentum tensor of a pressureless fluid, given by 
\bel{eq:tensor-form}
T^{\alpha\beta} (\rho,u) = \rho u^\alpha u^\beta,
\ee
where $\rho:  M \mapsto (0, +\infty)$ denotes the density of the fluid and  the velocity field $u= (u^\alpha)$ is normalized to be future-oriented, unit and timelike $u^\alpha u_\alpha = g_{\beta\beta'} u^\alpha u^{\beta'} =-1$ with $u^0 > 0$ and, therefore, 
\bel{eq:normalized} 
-1= -\Big(1 - {2 M \over r} \Big) (u^0)^2+ \Big(1 - {2 M \over r} \Big)^{-1} (u^1)^2. 
\ee
By assuming spherical symmetry, we can derive from the above system a single equation satisfied by a suitably normalized component of the velocity field, denoted below by $v \in (-1,1)$. 

As usual, by taking
\bel{eq:scalar}
v:= {1 \over {(1 - 2M/ r)}}{ {u^1 \over u^0}},
\ee
we get 
\be 
(u^0)^2= {1 \over {(1 - v^2) (1 - 2 M/r)}}, 
\qquad
(u^1)^2= {v^2 \over {1- v^2}} (1- 2 M/ r).
\ee
Elementary computations (following \cite{LX1}) yield us 
$$
\aligned
 \del_t \Big({ \rho \over {1- v^2}}\Big) + \Big({1- {2M \over r}} \Big) \del_r  \Big({\rho v \over {1-v^2}}\Big) + \rho{{v(2r- 2M)} \over {r^2(1- v^2)}} & = 0, 
\\
 \del_t \Big({\rho v \over {1- v^2}}\Big) + \Big({1- {2M \over r}}\Big) \del_r \Big({\rho v^2 \over {1-v^2}}\Big) + \rho{{M(1- 3v^2) +2v^2 r} \over {r^2(1- v^2)}}
& = 0.
\endaligned
$$
Combining these two equations together, we get 
\bel{eq:relatiBurgers}
\del_t \Big({v \over (1- {2M \over r})^2}\Big) + \del_r \Big({{v^2/2}\over{1- {2M \over r}}}\Big) + {M\over{r^2(1- {2M \over r})^2}}=0.
\ee

We now compare \eqref{eq:relatiBurgers} with  \eqref{eq:Cmodel}. Restricting now attention to radially symmetric solutions, then \eqref{eq:Cmodel} is equivalent to
 \bel{eq:103}
\del_t \Big({v \over (1- 2M /r)^2}\Big) + \del_r \Big({f(v) \over 1- 2M/r} \Big)= {2 M  r^2 \over (1 - 2 M/r)^2} h(v).
\ee
Clearly, this latter equation includes \eqref{eq:relatiBurgers} as a special case, obtained by taking 
\bel{eq:105}
f(s)= s^2/2- 1/2, \qquad h(s)=0. 
\ee
Hence, we can regard \eqref{eq:Cmodel} as a generalization to \eqref{eq:relatiBurgers}.


\section{Characteristics and steady states}
\label{sec:characteristic}

\subsection{Maximum principle} 

The method of characteristics allows us to obtain a first insight about the properties of (sufficiently regular solutions) to our balance law \eqref{eq:Cmodel}. It leads to ordinary differential equations along characteristic curves parametrized with respect to some time parameter (denoted by $s$ below). 
We would like to deduce some properties of solution $v$ by proposing the following assumption on the flux $f$ and the source $h$. 

\begin{structure}
\label{atr:assumption1}
The flux and source functions are assumed to satisfy 
\bel{eq:assum-flux}
f(\pm 1) + h(\pm1 )=0, \qquad f'(\pm 1) + h'(\pm 1) \neq 0.
\ee 
\end{structure}
 
We motivate our condition by the following analysis along characteristic curves. So, we consider the coupled system 
\bel{eq:ode-set} 
\aligned
{dt \over ds} & = {1 \over ({1- {2M \over r}})^2},
\\
{dx^j \over ds} & = {x^j \over {r( {1- {2M \over r})}}}f' \big(u(s) \big),
\\
u(s) & = v(t(s), x(s)). 
\endaligned
\ee
A straightforward computation shows that 
\bel{eq:112}
\aligned
u'(s) 
& = \del_t v {dt \over ds} + \del_j v {dx^j \over ds}
  = \del_t v {1 \over ({1- {2M \over r}})^2} + \del_j v {x^j \over {r( {1- {2M \over r})}}}f' \big(u(s) \big)
\\
& = \Bigg({{d-1}\over {r( {1- {2M \over r})}}}- {\del_j \Big( {x^j \over {r( {1- {2M \over r})}}}} \Big)\Bigg) f \big(u(s) \big) + {2M \over (r- 2M)^2} \, h \big(u(s) \big)
\\
& = {2M \over (r- 2M)^2} \Big( f \big(u(s) \big) + h \big(u(s) \big) \Big).
\endaligned       
\ee
This equation tells us how the values of a solution evolve along characteristics, and we use it in order to establish a maximum principle. It is convenient to assume a strict inequality in the data. 

\begin{proposition}[Maximum principle] \label{prop:max}
Consider the balance law \eqref{eq:Nmodel} under the condition \eqref{eq:assum-flux}. Then, given any initial data \eqref{eq:initial} satisfying
$$
\sup_\Omega | v_0 |  \leq 1,
$$
the solution $v=v(t,x)$ satisfies the same bound for all times 
$$
\sup_\Omega | v(t, \cdot) | \leq 1, \qquad t \geq 0, 
$$
as long as they remain sufficiently regular. 
\end{proposition}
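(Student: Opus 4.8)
The plan is to exploit the fact that, for a sufficiently regular solution, the balance law \eqref{eq:Nmodel} reduces along characteristics to the scalar ordinary differential equation \eqref{eq:112}, namely $u'(s) = \frac{2M}{(r(s) - 2M)^2}\big(f(u(s)) + h(u(s))\big)$, where $u(s) = v(t(s), x(s))$ is the trace of the solution along a curve solving \eqref{eq:ode-set}. The whole argument then rests on reading off the behaviour of this ODE near the endpoints $u = \pm 1$ of the admissible interval.

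First I would observe that, by the first condition in \eqref{eq:assum-flux}, the right-hand side of \eqref{eq:112} vanishes identically whenever $u = +1$ or $u = -1$; consequently the constant functions $u \equiv +1$ and $u \equiv -1$ are (global) solutions of this ODE. Writing $G := f + h$ and $c(s) := \frac{2M}{(r(s) - 2M)^2}$, the equation reads $u' = c(s)\, G(u)$ with $c(s) > 0$ for every $r(s) > 2M$ (the case $M = 0$ being trivial, since then $u$ is constant along characteristics). Because $f$ and $h$ are taken to be $C^1$, the function $G$ is locally Lipschitz, and the non-degeneracy $f'(\pm 1) + h'(\pm 1) \neq 0$ guarantees that $G$ vanishes only to first order at the endpoints, so that a clean bound $|G(u)| \le L\,|u \mp 1|$ holds near $u = \pm 1$. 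The comparison/uniqueness theory for the ODE $u' = c(s) G(u)$ then forbids any trajectory from crossing the equilibria $u = \pm 1$, so a solution with $u(0) \in [-1,1]$ must remain in $[-1,1]$ for all $s \geq 0$; that is, the interval $[-1,1]$ is positively invariant. (Concretely, a trajectory with $u(0) \le 1$ cannot reach a value exceeding $1$ without crossing the equilibrium solution $u \equiv 1$, which uniqueness rules out; the lower endpoint is handled symmetrically, and this treatment accommodates the non-strict inequality $|v_0| \le 1$.)

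It then remains to transfer this pointwise-in-$s$ bound into the claimed bound on $v$ over the whole spacetime. Here I would use that, for a smooth solution, the characteristic system \eqref{eq:ode-set} foliates the domain: through any point $(t,x)$ with $t > 0$ passes a characteristic curve which, since $\frac{dt}{ds} = (1 - 2M/r)^{-2} > 0$ makes $t$ strictly increasing along the flow, can be continued backwards until it meets the initial slice $\{t = 0\}$ at a point where $u = v_0 \in [-1,1]$ by \eqref{eq:initial}. Invariance of $[-1,1]$ along this curve then yields $|v(t,x)| = |u| \le 1$, and taking the supremum over $x$ gives the stated conclusion for each $t \ge 0$.

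The step I expect to be the main obstacle is precisely this last geometric reduction, rather than the ODE analysis. One must ensure that the backward characteristic through an arbitrary point stays inside $\Omega = \{r > 2M\}$ all the way down to $t = 0$, i.e. that it neither runs into the horizon $r = 2M$, where the coefficient $c(s)$ is singular, nor escapes to infinity in finite time. This is exactly the content of the hypothesis that the solution remains \emph{sufficiently regular}, under which the characteristic field is well-defined and the foliation property holds. A careful write-up would record that the singular coefficient does not spoil the conclusion, because the equilibrium structure at $u = \pm 1$ depends only on the positivity of $c(s)$ throughout $r > 2M$ and not on its size.
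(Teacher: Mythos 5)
Your proof is correct and rests on the same reduction as the paper's: both arguments pass to the characteristic system \eqref{eq:ode-set} and study the scalar ODE \eqref{eq:112} for $u(s)=v(t(s),x(s))$ near the endpoints $u=\pm 1$. Where you differ is in the mechanism at the endpoints. The paper uses the full strength of condition \eqref{eq:assum-flux}: since $H:=f+h$ vanishes at $\pm 1$ and $H'(\pm 1)\neq 0$, it splits into the two cases $H'(1)>0$ and $H'(1)<0$, deduces a sign for $H$ on a one-sided neighbourhood of $1$, and runs a monotonicity (respectively, an integration/contradiction) argument; the second case is explicitly carried out under the strict hypothesis $\sup|v_0|<1$. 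You instead observe that $u\equiv \pm 1$ are equilibria of $u'=c(s)H(u)$ and that $H$ is locally Lipschitz, so uniqueness for the initial value problem forbids any trajectory from crossing them. Your route buys two things: it never uses the non-degeneracy $f'(\pm 1)+h'(\pm 1)\neq 0$ (only $H(\pm 1)=0$ plus Lipschitz regularity of $f$ and $h$), and it treats the non-strict case $\sup|v_0|\le 1$ uniformly, which the paper handles only via the separate remark that data equal to $\pm 1$ stay there. The paper's route is more elementary in that it needs only sign information and no uniqueness theorem. Your closing discussion --- continuing the backward characteristic to $t=0$ and ensuring it stays in $r>2M$ so that $c(s)$ remains finite on the relevant compact $s$-interval --- addresses a point the paper leaves implicit under the clause ``as long as they remain sufficiently regular'', and is the right caveat to record.
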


\begin{proof}
Observe first that if $v_0 = \pm 1$ initially then it remains so for all times. 
It is sufficient to show that $v \leq 1$, since exactly the same arguments apply to showing $v \geq -1$.

Consider first the case of $H'(1) > 0$ with $H(s):= f(s) +h(s)$. By continuity, we have $H(s) < 0$ for all $s \in (1-\eps, 1)$ and some $\eps$. Hence, if $u \in [1-\eps, 1)$, \eqref{eq:112} implies that $u'<0$ and, consequently, $u\leq 1$ for all times. 

In the case $H'(1)<0$, we have $H(s) < 0$ for all $s \in (1, 1+ \eps)$ and some $\eps$. Recall $\sup | v_0 | < 1$, if $u(s_2)>1$ for some $s_2$, then $u(s)>1$ for $s\in (s_1, s_2)$ with $s_1 := \sup \{ s: u(s)\leq 1, s < s_2 \}$. We see that $u(s_1)=1$. However, if we integrate \eqref{eq:112} in $[s_1, s_2]$, the right-hand side would be negative ,while the 
left-hand side 
would be positive. Hence, $u \leq 1$ (actually, if $u$ could reach 1 at some $s$, then it must remain identically $1$ afterwards).
\end{proof} 


\subsection{Geometry of the characteristic curves}

\begin{subequations}
Along a characteristic we see that
\bel{eq:drds} 
{d r \over d s} 
= 
\del_j r {d x^j \over d s} 
= 
{f'(u) \over 1 - 2 M / r}.
\ee
Recalling \eqref{eq:112}, we get in the $(v, r)$ plane
\bel{eq:odeur} 
{d u \over d r} 
= 
{2 M \big(h(u) + f(u)\big) \over (r - 2 M) r f'(u)}
\ee
\end{subequations}
and, more explicitly,
\bel{eq:odeurS1} 
\widehat{F}(u) - \widehat{F}(u_0)  
= 
\log \Big({1 - {2 M / r} \over 1 - {2 M / r_0}} \Big), 
\qquad 
\widehat{F}(u)
:= 
\int_0^u {f'(w) \over h(w) + f(w)} \, d w, 
\ee
where $r_0 = r(s_0)$ and $u_0 = u(s_0)$ are given data at some time $s_0$.

Concerning the global behavior of the characteristics in the special case $f(w)= w^2/2 - 1/2$ and $h(w)=0$, which is the Burgers equation posed on the Schwarzschild background, the weak solutions in the $(u, r)$ plane can be expressed in terms of the initial data via a minimisation formulation based on characteristics; see \cite{BL}.


Here, to proceed with the study of the characteristic curves and for the sake of definitness, we assume some specific signs about the functions $f$ and $h$.  

\begin{structure} 
\label{atr:assumption2}
\bel{eq:fh} 
f(s) + h(s) < 0, \qquad s \in (-1, 1)
\ee
and
\bel{eq:f'} 
\aligned
& f'(s) < 0, \quad s \in (-1, 0),
\\
& f'(s) > 0, \quad s \in (0, 1).
\endaligned
\ee
\end{structure}

A direct consequence from \eqref{eq:fh} and \eqref{eq:f'} is that
\begin{eqnarray}
\widehat{F}(w)   \text{ is }
\left \{
\begin{array}{lll}
& \text{increasing and negative}, \qquad & w \in (-1, 0),
\\
& 0, \qquad & w = 0,
\\
& \text{decreasing and negative}, \qquad & w \in (0, 1).
\end{array}
\right.
\end{eqnarray}
We rewrite \eqref{eq:odeurS1} as
\bel{eq:odeurS2} 
\widehat{F}(u) 
= 
\log \Big(e^{\widehat{F}(u_0)}{1 - {2 M / r} \over 1 - {2 M / r_0}} \Big)
\ee
and, by solving for $u$, the ordinary differential equation \eqref{eq:drds} for the radius function $r(s)$ can be written as  
\begin{eqnarray}
{dr \over ds} 
= 
\left \{
\begin{array}{lll}
& \Big( 1 - {2M \over r} \Big)^{-1} f' \Bigg( \widehat{F}_+^{(-1)} \bigg( \log \Big(e^{\widehat{F}(u_0)}{1 - {2 M / r} \over 1 - {2 M / r_0}} \Big) \bigg) \Bigg) \quad \text{when } e^{\widehat{F}(u_0)}{1 - {2 M / r} \over 1 - {2 M / r_0}} \geq 1,
\\
& \Big( 1 - {2M \over r} \Big)^{-1} f' \Bigg( \widehat{F}_-^{(-1)} \bigg( \log \Big(e^{\widehat{F}(u_0)}{1 - {2 M / r} \over 1 - {2 M / r_0}} \Big) \bigg) \Bigg) \quad \text{when } e^{\widehat{F}(u_0)}{1 - {2 M / r} \over 1 - {2 M / r_0}} \leq 1. 
\end{array}
\right.
\end{eqnarray}
Here, $\widehat{F}_+^{(-1)}$ and $\widehat{F}_-^{(-1)}$ are the inverse functions of $\widehat{F}_+$ and $\widehat{F}_-$, respectively, and 
\begin{eqnarray}
\widehat{F} (w) 
= 
\left \{
\begin{array}{lll}
&\widehat{F}_-(w), \qquad w\in (-1, 0],
\\
&\widehat{F}_+(w), \qquad w\in [0, 1).
\end{array}
\right.
\end{eqnarray}
Note that $\widehat{F}_{\pm}$ are single-valued functions within the domain of interest.

We follow \cite{BL} and introduce the \emph{escape velocity} (whenever it exists) 
\be 
u_0^E 
:= 
\widehat{F}_+^{(-1)} \bigg( \log \Big( 1 - {2M \over r_0} \Big) \bigg),
\ee
which satisfies the property
\be 
\lim_{r \to +\infty} \Big( 1 - {2M \over r} \Big)^{-1} f' \Big( \widehat{F}_{\pm}^{(-1)} \bigg( \log \Big(e^{\widehat{F}(u_0)}{1 - {2 M / r} \over 1 - {2 M / r_0}} \Big) \bigg) \Big) 
= 
0.
\ee
Replacing the radius $r_0$ by the escape velocity parameter $u_0^E$ in \eqref{eq:odeurS2}, we obtain
\be 
u 
= 
\widehat{F}_{\pm}^{(-1)} \Big( \log \Big( e^{\widehat{F}(u_0) - \widehat{F}(u_0^E)} \big( 1 - 2M/r \big) \Big) \Big).
\ee

\begin{subequations}

The late-time behavior of $u=u(s)$ can be checked to be described as follows: 
\bei

\item \textbf{Negative initial data.} The function 
$u(s)$ decreases (as follows from \eqref{eq:112} and the assumption \eqref{eq:fh}). If $u_0 \in (-1, 0]$ with initial data $(s_0, r_0)$, then $u(s)$ remains negative and decreasing and $r(s)$ decreases towards $ 2 M$. More precisely, we have
\be 
\lim_{s\to +\infty} r(s) 
= 
2M, 
\qquad 
\lim_{s\to +\infty} u(s) 
= 
-1.
\ee

\item \textbf{Positive initial data with $0<u_0 < u_0^E$.} 
The positivity of $u_0$ initially ensures $dr/ds > 0$, that is, the characteristic curve initially moves away from the black hole. However, $u(s)$ keeps decreasing and eventually reaches $0$ at some time $s_0$. The dynamics then coincides with that for negative initial data. We conclude that
\be 
\lim_{s\to +\infty} r(s) 
= 
2M, 
\qquad 
\lim_{s\to +\infty} u(s) 
= 
-1.
\ee

\item \textbf{Positive initial data with $u_0 \geq u_0^E$.} In this case, the characteristic curve moves away from the black hole for all times and the asymptotic behavior is 
\be 
\lim_{s\to +\infty} r(s) 
= 
+ \infty, 
\qquad 
\lim_{s\to +\infty} u(s) 
= \widehat{F}_+^{(-1)} \Big( \widehat{F}(u_0) - \widehat{F}(u_0^E) \Big).
\ee
\eei

\end{subequations}


\subsection{Steady state solutions} 

Finally, let us consider solutions that are steady states representing a fluid at rest in the curved black hole geometry. This is a special class of solutions of interest, for instance, in designing (well-balanced) numerical schemes and in finding test cases. 
In view of the radially symmetric form of our equation \eqref{eq:103} (but possibly for non-radially symmetric solutions), for a time-independent solution we obtain 
the ordinary differential equation 
\bel{eq:116}
\del_r \Big({f(u) \over 1- 2M/r}\Big)
= 
{2 M \over r^2 (1 - 2 M/r)^2} h(u) 
\ee 
and, once again, we get the same ordinary differential equation as \eqref{eq:odeur}
\bel{eq:117}
{d u \over d r} 
= 
{2 M \big(h(u) + f(u)\big) \over (r - 2 M) r f'(u)}.
\ee
Under Assumptions \eqref{atr:assumption1} and \eqref{atr:assumption2}, for any given data $(r_0, u_0)$ we can distinguish between two cases: 

\bei

\item \textbf{Negative $u_0$.} Then $u$ is increasing and
$$
\lim_{r\to +\infty} u(r) 
=  
\widehat{F}_-^{(-1)} \Big( \widehat{F}(u_0) - \widehat{F}(u_0^E) \Big).
$$

\item \textbf{Positive $u_0$.} Then $u$ is decreasing and
$$
\lim_{r\to +\infty} u(r) 
=  
\widehat{F}_+^{(-1)} \Big( \widehat{F}(u_0) - \widehat{F}(u_0^E) \Big).
$$
\eei


\section{Coordinates covering the black hole interior}
\label{sec:interior}

\subsection{An alternative choice of time slicing}

In this section, we illustrate the fact that coordinates can be chosen in many different manners. While, for Schwarschild spacetime, this leads to significantly more involved algebraic expressions, such alternative coordinates may allow one to cover a larger region of the spacetime. For definiteness, in this section we take 
$n = 3$.  
Hence, we now introduce a nonlinear hyperbolic equation posed in a larger domain of the Schwarzschild geometry, obtained by ``crossing'' the horizon and we study the interior of the black hole. 
We follow \cite{DB} and introduce the following metric: 
\begin{subequations}
\bel{eq:in-metric}
\widehatg = -{{R - 2 M}\over{R}} d\widehatt^2+ 2 \, {f_1(R) \over R - R_0} \, d\widehatt  dR + \Big( {R \over R - R_0} \Big)^2 (dR^2+ (R - R_0)^2  \, g_{S^2} ),
\ee
in which $\widehatt$ denotes the time variable and $R$ the radial variable with
\be
f_1(R) : = \sqrt{2 r (M- R_0) + R_0 (2 M- R_0)}. 
\ee 
\end{subequations}
Here $R_0 \in (0, M]$ is a parameter that is fixed, and we observe that the above expression is identical to the metric \eqref{eq:101} in the limit $R_0 \to 0$, for which the radial variables $R$ and $r$ would then coincide. 
This new slicing \eqref{eq:in-metric} allows us to go inside of the black hole (when $R_0 > 0$), and we cover the region $\{r : r+ R_0 - 2 M > 0 \}$, within which the metric remains of a definite Lorenztian signature.

In fact, we can transform \eqref{eq:in-metric} (for a restricted domain of the variables, only) into the metric  \eqref{eq:101}, by setting  
\be
\aligned
& \widehatt  = t + h(R), , \qquad 
R : = r + R_0
\\
& {d h \over d R} = {1 \over 1 - 2 M / R} \sqrt{1 - (1 - 2 M / R){R^2 \over r^2}}.
\endaligned
\ee
In the following, it will be convenient to rely on the vector fields
$$
\delh_0 := \del_{\widehatt }, 
\qquad \delh_1 := \del_{R}, 
\qquad \delh_2 := \del_{\theta},
\qquad \delh_3 := \del_{\phi}.
$$


We rewrite \eqref{eq:in-metric} in the matrix form  
$$
(\widehatg_{\alpha \beta})=
\begin{pmatrix}
-{{R - 2 M}\over{r+ R_0}} & {f_1 \over r} &   0 &  0   \\
 {f_1 \over r} & ({R \over r})^2  & 0   &  0  \\
0 &  0  &  R^2  &  0   \\
0 &  0 & 0   & R^2 \sin^2 \theta
\end{pmatrix},
$$
with inverse
$$
(\widehatg^{\alpha \beta})= 
\begin{pmatrix}
 -({R\over r})^2& {f_1 \over r}  &  0 & 0  \\
{f_1 \over r} & {{R - 2 M}\over{R}}  &  0 & 0  \\
0 &  0  & (R)^{-2}  &  0  \\
0 & 0  & 0   & (R)^{-2} \sin^{-2} \theta
\end{pmatrix}.
$$
After a tedious computation, the (non-vanishing) Christoffel symbols 
$\Gamma^{\mu}_{\alpha \beta} = {1\over 2} \widehatg^{\mu \nu}(\delh_{\alpha} \widehatg_{\beta \nu} + \delh_{\beta} \widehatg_{\alpha \nu}- \delh_{\nu} \widehatg_{\alpha \beta})$
are found to be 
\bel{eq:Chr-sym1}
\aligned
\Gamma^{0}_{00}& = {M f_1 \over r(r+ R_0)^2}, 
\hskip4.5cm
&&
\Gamma^{0}_{01}=\Gamma^{0}_{10} =  {M \over r^2},
\\
\Gamma^{0}_{11}& = {(r+ R_0)^2 (M- R_0) \over r^3 f_1} + {R_0^2 (r+ R_0) \over r^3 f_1},
\\
\Gamma^{0}_{22}& = - {{r+ R_0}\over r} f_1,
\hskip4.5cm
&&
\Gamma^{0}_{33}= - {{r+ R_0}\over r} f_1 \sin^2 \theta,
\\
\Gamma^{1}_{00}& =  {M(r+ R_0 - 2 M)\over (r+ R_0)^3},
\hskip3.4cm
&&
\Gamma^{1}_{01}= \Gamma^{1}_{10}= -{M f_1 \over r(r+ R_0)^2},
\\
\Gamma^{1}_{11}& = - {M \over r^2},
\hskip5.5cm
&&
\Gamma^{1}_{22}= -(r+ R_0- 2 M),
\\
\Gamma^{1}_{33}& = -(r+ R_0- 2 M) \sin^2 \theta,
\hskip2.6cm
&&
\Gamma^{2}_{12}= \Gamma^{2}_{21} =\Gamma^{3}_{13}=\Gamma^{3}_{31} =  {1 \over {r+ R_0}},
\\
\Gamma^{2}_{33}& = -\sin \theta \cos \theta,
\hskip4.3cm
&&
\Gamma^{3}_{23}= \Gamma^{3}_{32} = {\cos \theta \over \sin \theta}.
\endaligned
\ee


\subsection{Formulation of the balance law}

We follow the strategy in the previous section and derive our equation from the pressureless Euler system. 
For the (normalized) vector 
$$
\hatu = \big(\hatu ^0, \hatu ^1, 0, 0):=(u^0 + h'(R)\,u^1, u^1, 0, 0 \big),
$$
we find 
\bel{eq:unit1}
\aligned
-1 = \hatu ^\alpha \hatu _\alpha
    & = -{{r+ R_0 - 2 M}\over{r+ R_0}} (\hatu ^0)^2+ 2  {f_1 \over r} \hatu ^0 \hatu ^1 
+ \Big(1 + {R_0 \over r} \Big)^2(\hatu ^1)^2
    \\
    & = -{R - 2 M \over R} (\hatu ^0 - h'(R) \hatu ^1)^2 + {R \over R - 2 M} (\hatu ^1)^2. 
\endaligned
\ee 

\begin{proposition} 
From the pressureless Euler equations, the velocity vector 
\bel{eq:def-vh}
\hatv 
:= 
{1\over {1 - 2 M / R}} {\hatu ^1 \over \hatu ^0 - h'(R) \hatu ^1} 
\ee
satisfies the nonlinear hyperbolic equation
\bel{eq:burg5}
\Big( 1 + h'(R) \hatv (1 - 2M/R) \Big) \delh_{0} \hatv + {\hatv (1 - 2M/R) } \delh_{1} \hatv - {M \over R^2} \hatv^2 + {M \over R^2} 
= 0. 
\ee 
\end{proposition}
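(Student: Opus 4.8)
The plan is to exploit the fact that for a pressureless fluid the flow lines are timelike geodesics, so that \eqref{eq:burg5} should be nothing but the radial geodesic equation rewritten in terms of the scalar $\hatv$. First I would record the splitting of the Euler equations \eqref{eq:Euler}: contracting $\nabla_\alpha(\rho\,\hatu^\alpha\hatu^\beta)=0$ with $\hatu_\beta$ and using $\hatu^\beta\hatu_\beta=-1$ (whence $\hatu_\beta\nabla_\alpha\hatu^\beta=0$) yields the continuity law $\nabla_\alpha(\rho\,\hatu^\alpha)=0$; feeding this back gives the geodesic equation $\hatu^\alpha\nabla_\alpha\hatu^\beta=0$. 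Since $\rho$ has dropped out, only the radial ($\beta=1$) geodesic equation is needed to reach the single scalar equation \eqref{eq:burg5}; the continuity equation governs $\rho$ separately.

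Next I would use the normalization \eqref{eq:unit1} to express the components of $\hatu$ through $\hatv$ and $R$. Writing $w:=\hatu^0-h'(R)\hatu^1$ and $A:=1-2M/R$, the definition \eqref{eq:def-vh} gives $\hatu^1=A\hatv\,w$ and $\hatu^0=w\big(1+h'(R)A\hatv\big)$, while the last line of \eqref{eq:unit1} collapses to $-1=-A\,w^2(1-\hatv^2)$, i.e. $w^2=\big(A(1-\hatv^2)\big)^{-1}$. In particular $\hatu^1=\dot R=\sqrt{A}\,\hatv/\sqrt{1-\hatv^2}$ is a function of $(R,\hatv)$ alone (this is what prevents spurious $h''$ terms: one never differentiates $\hatu^0,\hatu^1$ directly). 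The purpose of this step is twofold: it shows that the convective operator factorizes as $\hatu^\alpha\delh_\alpha = w\big[(1+h'A\hatv)\delh_0+A\hatv\,\delh_1\big]$, so that $\hatu^\alpha\delh_\alpha\hatv$ reproduces exactly the first-order part of \eqref{eq:burg5} up to the factor $w$; and it lets me trade the material derivative $\tfrac{d}{d\tau}\hatu^1=\ddot R$ for a derivative of $\hatv$ by the chain rule.

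The heart of the proof is to show that the radial geodesic equation takes the Newtonian form $\ddot R=-M/R^2$, that is, $\Gamma^1_{\alpha\gamma}\hatu^\alpha\hatu^\gamma = M/R^2$. Substituting the Christoffel symbols \eqref{eq:Chr-sym1} and the expressions above, the combination $\Gamma^1_{00}(\hatu^0)^2+2\Gamma^1_{01}\hatu^0\hatu^1+\Gamma^1_{11}(\hatu^1)^2$ must reduce to $M/R^2$. The key algebraic identity is
\[
h'(R)\,(1-2M/R)=\frac{f_1(R)}{r},\qquad r=R-R_0,
\]
which follows by squaring the definition of $dh/dR$ and using $f_1^2=r^2+2MR-R^2$ (the same relation that gives $\det\widehatg=-R^4\sin^2\theta$). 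With this identity the terms carrying a bare factor $1/r^2=1/(R-R_0)^2$ — one coming from $\Gamma^1_{11}(\hatu^1)^2$ and one hidden inside $h'^2$ in $\Gamma^1_{00}(\hatu^0)^2$ — cancel, leaving precisely $M/R^2$. I expect this cancellation to be the main obstacle and the only genuinely delicate computation.

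Finally I would translate $\ddot R=-M/R^2$ back into an equation for $\hatv$. Differentiating $\hatu^1=\sqrt A\,\hatv(1-\hatv^2)^{-1/2}$ along the flow and using $\dot R=\hatu^1$ together with $\tfrac{d}{d\tau}=\hatu^\alpha\delh_\alpha$, one finds $\ddot R = \tfrac{M}{R^2}\tfrac{\hatv^2}{1-\hatv^2}+\tfrac{\sqrt A}{(1-\hatv^2)^{3/2}}\,\hatu^\alpha\delh_\alpha\hatv$; equating this to $-M/R^2$ and solving gives $\hatu^\alpha\delh_\alpha\hatv = w\,\tfrac{M}{R^2}(\hatv^2-1)$. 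Dividing by $w$ and invoking the factorization of the convective operator from the second step yields \eqref{eq:burg5}. As a consistency check, one may verify that combining instead the $\beta=0$ and $\beta=1$ conservation laws in the manner of Section~\ref{sub:moti} produces the same equation.
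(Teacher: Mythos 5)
Your proposal is correct, and it takes a genuinely different route from the paper. The paper never invokes the geodesic structure: it substitutes the parametrization \eqref{eq:132} into the stress tensor to get \eqref{eq:tensors1}, writes out the $\beta=0$ and $\beta=1$ components of $\nabla_\alpha T^{\alpha\beta}=0$ with the full list of Christoffel symbols, forms the ratio $q=T^{01}/T^{00}=T^{11}/T^{01}$ so that $\rho$ cancels between the two equations (exactly the mechanism of Section \ref{sub:moti}), and then converts the resulting transport equation \eqref{eq:burg1} for $q$ into \eqref{eq:burg5} by ``further cumbersome calculations.'' You instead observe that contracting the pressureless system with $\hatu_\beta$ splits it into the continuity law plus the geodesic equation (valid where $\rho>0$, which the paper assumes), so the whole content of \eqref{eq:burg5} is the single radial identity $\Gamma^1_{\alpha\gamma}\hatu^\alpha\hatu^\gamma=M/R^2$. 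I checked that identity: with $w^2=\big(A(1-\hatv^2)\big)^{-1}$, $\hatu^0=w(1+\tfrac{f_1}{r}\hatv)$, $\hatu^1=A\hatv w$ and $f_1^2=r^2-AR^2$, the three terms combine to $\tfrac{M}{R^2(1-\hatv^2)}\big(1-\tfrac{f_1^2\hatv^2}{r^2}\big)-\tfrac{MA\hatv^2}{r^2(1-\hatv^2)}=\tfrac{M}{R^2}$, and your back-substitution through $\ddot R$ then reproduces \eqref{eq:burg5} exactly. What your route buys is a much shorter and more structured computation (only $\Gamma^1_{00},\Gamma^1_{01},\Gamma^1_{11}$ are needed, and the Newtonian form of the source term is explained rather than observed); what the paper's route buys is uniformity with the derivation of \eqref{eq:relatiBurgers} in Section \ref{sub:moti} and a template that would survive the addition of pressure, where the flow is no longer geodesic. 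The only points worth making explicit in a write-up are the sign convention $w>0$ (future orientation, consistent with the branch chosen in \eqref{eq:132}) and the restriction to radial flow $\hatu^2=\hatu^3=0$, both of which the paper also takes tacitly.
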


\begin{proof} In view of the notation \eqref{eq:def-vh}, we get
\bel{eq:132}
\aligned
\hatu ^0
& = 
{1 \over \sqrt{(1 - \hatv^2)(1 - 2M/R)}} \bigg( 1 + h'(R) \hatv \Big( 1 - {2 M \over R} \Big) \bigg),
\\
\hatu ^1
& = 
{\hatv \over \sqrt{1 - \hatv^2}} \Big( 1 - {2 M \over R} \Big)^{1/2}, 
\endaligned
\ee
which is one representation of $(\hatu^0, \hatu^1)$.
Plugging these expressions into \eqref{eq:tensor-form}, we obtain
\bel{eq:tensors1}
\aligned
T^{00} 
& = 
{1 \over (1 - \hatv^2)(1 - 2M/R)} \Bigg( 1 + h'(R) \hatv \Big( 1 - {2 M \over R} \Big) \Bigg)^2,
\\
T^{01} 
& = 
T^{10} 
= 
{\hatv \over 1 - \hatv^2} \bigg( 1 + h'(R) \hatv \Big( 1 - {2 M \over R} \Big) \bigg),
\\
T^{11} 
& = 
{\hatv^2 \over 1 - \hatv^2} \Big( 1 - {2 M \over R} \Big).
\endaligned
\ee
From \eqref{eq:Euler}, thus 
$\delh_{\alpha} T^{\alpha \beta} + \Gamma^{\alpha}_{\alpha \gamma} T^{\gamma \beta} + \Gamma^{\beta}_{\alpha \gamma}T^{\alpha \gamma}=0$, 
we get
$$
\aligned
0& =\delh_{0} T^{00} + \delh_{1} T^{10} + (2 \Gamma^{0}_{00} + \Gamma^{1}_{10}) T^{00} + (3 \Gamma^{0}_{01} + \Gamma^{1}_{11} + \Gamma^{2}_{21} + \Gamma^{3}_{31}) T^{01} + \Gamma^{0}_{11} T^{11}
\\
& =: \delh_{0} T^{00} + \delh_{1} T^{10} + S_{0,0} T^{00} + S_{0,1} T^{01} + S_{0,2} T^{11},
\endaligned
$$
$$
\aligned
0& = \delh_{0} T^{01} + \delh_{1} T^{11} + \Gamma^{1}_{00} T^{00} + (\Gamma^{0}_{00} + 3\Gamma^1_{01}) T^{01} + (\Gamma^0_{01} + 2\Gamma^1_{11} + \Gamma^2_{21} + \Gamma^3_{31}) T^{11} 
\\
& =:  \delh_{0} T^{01} + \delh_{1} T^{11} + S_{1,0} T^{00} + S_{1,1} T^{01} + S_{1,2} T^{11},
\endaligned
$$
Now we set 
\bel{eq:141}
q:= {T^{01}\over T^{00}}= {T^{11}\over T^{01}}= {\hatv (1 - 2M/R) \over 1 + h'(R) \hatv (1 - 2M/R)},
\ee
and our calculation leads us to 
\bel{eq:burg1}
T^{00} \delh_{0} q+ T^{01} \delh_{1} q+ S_{1,0} T^{00} + (S_{1,1}- S_{0,0}) T^{01} + (S_{1,2}- S_{0,1}) T^{11}- q S_{0,2} T^{11}=0. 
\ee
Finally, further cumbersome calculations give usthe final form \eqref{eq:burg5}.
\end{proof}

\subsection{Characteristics and maximum principle}

As we mentioned in the beginning of this section, the new metric $\widehatg$ coincides with the Schwarzchild metric $g$ when $r$ is replaced by $R=r+R_0$ in \eqref{eq:101}, hence it is not surprising to have the following result. Namely, if we replace $r$ by $R$ throughout Section \ref{sec:formulation}, then Burgers equation \eqref{eq:relatiBurgers} is equivalent to \eqref{eq:burg5}.
This is easy to check  with 
\be 
\del_0 v = \delh_{0} v, \qquad \del_1 v = h'(R) \delh_{0} v + \delh_{1} v.
\ee
From the equation \eqref{eq:relatiBurgers} we have 
\be 
\del_t v + (1- 2M/R) v \del_R v = \Big( 1+ h'(R) \hatv (1 - 2M/R) \Big) \delh_{0} v + (1- 2M/R) v \delh_{1} v,
\ee
which coincides with \eqref{eq:burg5}. We can now restate our previous results in the new coordinates, and we only discuss in detail the new features.

From our equation \eqref{eq:burg5}, the characteristic curves with $\uh(s) := \vh(t(s), r(s))$ are given by
\bel{eq:new-character}
\aligned
{dt \over ds} 
& =  1 + h'(R) \uh(s) (1 - 2M/R),
\\
{dR \over ds} 
& = (1 - 2M/R) \uh(s). 
\endaligned
\ee
Moreover, we have
\bel{eq:uh-ds}
{d \uh \over ds}
= {M \over R^2} (\uh^2 - 1).
\ee
Similarly to what we did in Proposition \ref{prop:max}, we can check the following result. 

\begin{proposition}[Maximum principle]
Consider the equation \eqref{eq:burg5}. If the initial data satisfies
$$
\sup |\vh_0| \leq 1,
$$
then any smooth solution to \eqref{eq:burg5} also satisfies
$$
\sup |\vh| \leq 1.
$$
\end{proposition}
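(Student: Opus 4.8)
The plan is to reproduce the characteristic argument of Proposition~\ref{prop:max} in the new coordinates, the only change being that the scalar evolution is now governed by the reduced ordinary differential equation \eqref{eq:uh-ds}, namely $\frac{d\uh}{ds} = \frac{M}{R^2}\big(\uh^2 - 1\big)$, satisfied by $\uh(s) = \vh(t(s), r(s))$ along the curves \eqref{eq:new-character}. First I would record that, in the region covered by this slicing, one has $R = r + R_0 > 2M \ge 0$, so the coefficient $M/R^2$ is well defined and nonnegative; when $M = 0$ the right-hand side vanishes, $\uh$ is constant along characteristics, and the claim is immediate, so I may assume $M > 0$ and hence $M/R^2 > 0$.

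The core observation is that the right-hand side factors as $\frac{M}{R^2}(\uh - 1)(\uh + 1)$, so that the two constants $\uh \equiv 1$ and $\uh \equiv -1$ are stationary solutions of the scalar ODE \eqref{eq:uh-ds}. Since the right-hand side is polynomial in $\uh$, hence locally Lipschitz, uniqueness for the initial value problem forbids any characteristic issued with $\uh_0 \in (-1,1)$ from ever attaining the value $+1$ or $-1$: reaching such a value at some parameter $s_\star$ would force $\uh$ to coincide with the corresponding constant solution, a contradiction. This already shows that $[-1,1]$ is invariant along characteristics. The sign of the right-hand side refines the picture: for $\uh \in (-1,1)$ one has $\uh^2 - 1 < 0$, so $\uh(s)$ is strictly decreasing, which secures the upper bound $\uh < 1$ at once and leaves the lower bound $\uh > -1$ to be enforced by the equilibrium at $-1$, matching the delicate case in Proposition~\ref{prop:max} with $H(s) = s^2/2 - 1/2$. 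To convert the bound along characteristics into the stated supremum bound, I would use that every point of the domain lies on a characteristic emanating from the slice $\{t = 0\}$, so that $|\vh_0| \le 1$ propagates to all later times.

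The main obstacle I expect is not the ODE analysis, which is immediate from the quadratic structure, but the verification that the characteristics genuinely foliate the region, so that the pointwise bound is inherited everywhere. This amounts to checking from \eqref{eq:new-character} that $\frac{dt}{ds} = 1 + h'(R)\,\uh\,(1 - 2M/R)$ stays positive throughout the invariant range $\uh \in [-1,1]$; since $h'(R)(1 - 2M/R) = \sqrt{1 - (1 - 2M/R)R^2/r^2}$ by the defining relation for $h$, this quantity can be checked to lie in $[0,1)$ in the relevant region, whence $\frac{dt}{ds} \ge 1 - \sqrt{1 - (1 - 2M/R)R^2/r^2} > 0$ and $t$ is a legitimate parameter along each characteristic. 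As in Proposition~\ref{prop:max}, the conclusion is asserted only for as long as the solution remains sufficiently regular.
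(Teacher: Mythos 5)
Your proof is correct and follows essentially the same route the paper intends: the paper only sketches this result by reference to Proposition~\ref{prop:max}, and your argument is precisely the characteristic analysis based on \eqref{eq:new-character}--\eqref{eq:uh-ds}, with the quadratic right-hand side $\frac{M}{R^2}(\uh^2-1)$ playing the role of $H=f+h$ there. Your use of ODE uniqueness at the equilibria $\uh=\pm 1$ is a minor (and clean) variant of the paper's sign/integration argument, and your check that $h'(R)(1-2M/R)\in[0,1)$ so that $dt/ds>0$ is a welcome detail the paper leaves implicit.
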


Now, in the $(\uh, r)$--plane let us observe that 
$$
{d \uh \over dR}
= {\uh^2 - 1 \over \uh} {M \over R (R - 2M)},
$$
which follows from \eqref{eq:new-character} and \eqref{eq:uh-ds}. From this, we obtain 
\bel{eq:uh-R0}
{1 - \uh^2(R) \over 1 - {2M \over R}}
= {1 - \uh^2_0 \over 1 - {2M \over R_0}},
\ee
where $(\uh_0, R_0)$ is the initial location. Thus, our previous conclusions concerning the characteristic curves are recovered here. This is of course not surprising, since we are treating the same differential equation expressed in different coordinates. This second formulation however may have some numerical advantage when the horizon, instead of being fixed as it is in the present model, is dynamical. 


\section{Finite volume method and convergence analysis} 
\label{sec:FVM}

\subsection{Formulation of the finite volume scheme}

Having considered the formulation \eqref{eq:Nmodel} (in Section \ref{sec:formulation})
and the formulation \eqref{eq:burg5} (in Section \ref{sec:interior}), we now study the numerical approximation of the general balance law \eqref{eq:geometric1} in a setting that, in principle, may encompass both formulations. For definiteness, we treat the outer domain of communication so that the space variable varies in a half-line and no boundary condition is required at the boundary. 
The hyperbolic model of interest reads 
\bel{eq:MainModel}
\aligned
&
\nabla_\alpha \big( X^\alpha(v, \cdot) \big) = q(v, \cdot) \quad \text{ in } \Mcal,
\\
& v: \Mcal \to [-1,1], 
\endaligned
\ee
in which $\Mcal$ denotes the outer domain of communication of a Schwarzschild black hole with radius $2M$, as we described earlier. Here $X^\alpha = X^\alpha(w, \cdot)$ is a smooth vector field on $\Mcal$, depending upon the real variable $w \in [-1,1]$. An hyperbolicity condition and a condition at the boundary will be made explicit below. 

We are going to formulate a finite volume scheme for the equation \eqref{eq:MainModel} and  establish its convergence by generalizing the technique of proof in \cite{ALO}. In contrast with  this later work, 
the spacelike slices in $\Mcal$ are non-compact and the flux vector $X^\alpha(\bar{v}, x)$ is no longer assumed to be geometry compatible (i.e. $X^\alpha(\bar{v}, x)$ does not satisfy divergence free condition), and at the (horizon) boundary, no boundary data is needed.


In the class of interest in the present paper, the flux vector field satisfies the following property, which implies that  no boundary condition is needed: the spatial components of the vector field $X(\cdot, x)$ vanishes on the boundary, i.e.
\be 
X^a(\cdot, x) = 0 \quad \text{ on the boundary } \del \Mcal. 
\ee 
Following \cite{ALO} we  design a finite volume scheme for \eqref{eq:MainModel} as follows. We introduce a spacetime triangulation $\Tcal_h  =\bigcup_{K \in \Tcal_h }K$ of $\Mcal$ such that 
the boundary $\del K$ of each element $K$ is the union of three possible types of faces:
\bei 

\item A face $e_K^+$ is spacelike and we denote its future--oriented outward unit normal by $n_{K, e_K^+}$.

\item A face $e_K^-$ (in the past of $e_K^+$) is also spacelike, and we denote its past--oriented outward unit normal by $n_{K, e_K^-}$.

\item A vertical face denoted by $e^0$ is timelike, and whose inward unit normal is denoted by $n_{K, e^0}$, and the union of all of such faces is denoted by $\del^0 K := \del K \setminus \{e_K^+, e_K^- \}$. 

\eei

\noindent By definition, for every pair of distinct elements $K, K'\in \Tcal_h $, the intersection $K\cap K'$ is either a common face of $K, K'$ or a submanifold of co-dimension at least $2$. We use $K_e$ to denote the unique neighbor of $K$ sharing the same edge $e$. We denote by $K^\pm$ the neighbors of $K$ which share the same edge $e^\pm$.

By integrating the equation \eqref{eq:MainModel} over an arbitrary element $K\in \Tcal_h$ and applying the divergence theorem, we obtain 
\bel{eq:204-0}
\aligned
&\int_{e_K^+} g (X(v, p), n_{K, e_K^+}(p)) \, \dVbound(p)
+ \int_{e_K^-} g (X(v, p), n_{K, e_K^-}(p)) \, \dVbound(p)
\\
&- \sum_{e^0 \in \del^0K} \int_{e^0} g (X(v, p), n_{K, e^0}(p)) \, \dVbound(p)
=
\int_{K} q(v, p) \, dV(p). 
\endaligned
\ee
Here, $n$ denotes the exterior and unit, normal vector field along the boundary face under consideration, while  
$\dVbound$ is the induced measure element on the boundary. 
Our finite volume scheme is based on the following approximation formulas, in which $e^0$, etc. denotes an edge of $K$: 
\begin{subequations}
\bei
\item \text{Discretization of the main  variable:}
\bel{eq:205-0}
\int_{e_K^\pm} g (X(v, p), n_{K, e_K^\pm}(p)) \, \dVbound(p)
\simeq 
|e_K^\pm| \mu^X_{K, e_K^\pm}(v_K^\pm).
\ee
\item \text{Discretization  of the flux:}
\bel{eq:206-0} 
\int_{e^0} g (X(v, p), n_{K, e^0}(p)) \, \dVbound(p)
\simeq 
|e^0| f_{K,e^0}(v^-_K, v^-_{K_{e^0}}).
\ee
\item \text{Discretization  of the source  term:}
\bel{eq:206a-0}
\int_{K} q(v, p) \, dV(p)
\simeq 
\sum_{e^0\in \del^0 K} |e^0| \mu^X_{K, e^0}(v^-_K) + |K| \qt^A(v_K^-). 
\ee
\eei
\end{subequations}
Here, the numerical flux $f_{K,e}: \RR^2 \to \RR$ is chosen to satisfy the properties of consistency, conservation and monotonicity:
\begin{subequations}
\bei
\item \text{Consistency property:}
\bel{eq:consis0} 
f_{K, e^0}(v, v)
=
{1\over |e^0|} \int_{e^0} g (X(v, p), n_{K, e^0}(p)) \, \dVbound(p),
\qquad
v \in \RR.
\ee

\item \text{Conservation property:}
\bel{eq:conser0} 
f_{K, e^0}(u, v)
=
-f_{K_{e^0}, e^0}(v, u),
\qquad
u, v \in \RR.
\ee

\item \text{Monotonicity property:}
\bel{eq:monoto0} 
\del_u f_{K, e^0}(u, v)
\geq 
0,
\qquad
\del_v f_{K, e^0}(u, v)
\leq 
0,
\qquad
u, v \in \RR.
\ee
\eei
\end{subequations}
Also we have written 
\be
q(v, p) 
= 
\big(\nabla_a X^a(\cdot, p)\big)(v) + \qt(v, p),
\ee
and 
\be
\mu^X_{K, e}(\bar{v})
:=
{1\over |e|} \int_{e} g (X(\bar{v}, p), n_{K, e}(p)) \, dV_{e}.
\ee

Finally, the finite volume approximations are defined by 
\bel{eq:fv-0}
\aligned
|e_K^+| \mu^X_{K^+, e_K^+}(v_K^+)
= 
|e_K^-| \mu^X_{K, e_K^-}(v_K^-) 
& - \sum_{e^0 \in \del^0K}|e^0| f_{K,e^0}(v^-_K, v^-_{K_{e^0}}) 
\\
& - \sum_{e^0\in \del^0 K} |e^0| \mu^X_{K,e^0}(v^-_K) - |K| \qt^A(v_K^-).
\endaligned
\ee


\subsection{Convergence and existence theory}

Based on the geometric formulation of a finite volume method above, we can now proceed with the analysis of our model problem \eqref{eq:Nmodel}. We integrate \eqref{eq:Nmodel} over an element $K$ and by applying the divergence theorem 
\bel{eq:204}
\aligned
\int_K v(t_{n+1}, \cdot)\, dx 
&= 
\int_K v(t_n, \cdot) \, dx - \int_{t_n}^{t_{n+1}} \int_{\del K}{f(v) \over r}\Big( 1 - {2M \over r} \Big)x \cdot n \, \dVbound dt 
\\
& \quad
+ \int_{t_n}^{t_{n+1}} \int_K g(v, r) \, dxdt,
\\
\int_{t_n}^{t_{n+1}}\int_K g(v, r) \, dxdt 
&= 
\int_{t_n}^{t_{n+1}} \int_K \del_j \Bigg( \Big(1- {2M \over r} \Big) {x^j \over r} \Bigg) f(v) \, dxdt 
\\
& \quad
+ \int_{t_n}^{t_{n+1}} \int_{K} {2 M \over r^2} \Bigg( f(v) + h(v) \Bigg) \, dxdt,
\endaligned
\ee
where $n$ denotes the outward unit normal vector. We apply the following approximations: 
\begin{subequations}
\bei

\item \text{Discretization of the main variable:}
\bel{eq:205}
\int_K v(t_{n+1}, \cdot) \, dx 
\simeq 
|K| v^{n+1}_K, 
\ee

\item \text{Discretization of the flux:}
\bel{eq:206} 
\int_{t_n}^{t_{n+1}}\int_{e}{{f(v) \over r}\Big( 1 - {2M \over r} \Big)x \cdot n_{K, e}} \, \dVbound dt 
\simeq 
\tau |e| f_{K,e}(v^n_K, v^n_{K_e})\omega_{K, e},
\ee
where, with $x_e$ being the center of $e$ and $r_e = |x_e|$, 
\be
\omega_{K, e} := {1 \over r_e } \Big( 1 - {2M \over r_e} \Big) x_e \cdot n_{K, e}. 
\ee

\item \text{Discretization of the source term:}
\bel{eq:206a}
\int_{t_n}^{t_{n+1}}\int_{K} g(v, r) \, dxdt 
\simeq 
\tau \sum_{e\in \del K} |e| f(v^n_K) \omega_{K, e} + \tau |K| \big( f(v^n_K) + h(v^n_K) \big) \theta_K,
\ee
where, $r_K$ being the radial variable evaluated at the center of $K$, 
\be
\theta_K := {2M \over r_K^2}, \qquad 
\ee

\eei
\end{subequations}

In the above, we denoted by $e$ some edge of $K$, and the numerical flux $f_{K,e}:~\RR^2 \to \RR$ is chosen to satisfy the properties of consistency, conservation and monotonicity, that is, in our case
\begin{subequations}
\bei

\item \text{Consistency property:}
\bel{eq:consis22} 
f_{K, e^0}(v, v)
=
f(v),
\qquad
v \in \RR.
\ee
\item \text{Conservation property:}
\bel{eq:conser22} 
f_{K, e^0}(u, v) 
=
f_{K_{e^0}, e^0}(v, u),  
\qquad
u, v \in \RR.
\ee
\item \text{Monotonicity property:}
\bel{eq:monoto22} 
\omega_{K, e^0} \del_u f_{K, e^0}(u, v)
\geq 
0,
\qquad
\omega_{K, e^0} \del_v f_{K, e^0}(u, v)
\leq 
0,
\qquad
u, v \in \RR.
\ee
\eei
\end{subequations}
%


The finite volume approximations are then given by the explicit scheme  
\bel{eq:fv}
v^{n+1}_K
= 
v^n_K -{\tau \over |K|}\sum_{e \in \del K} |e|f_{K, e}(v^n_K, v^n_{K_e}) \omega_{K, e}
+ {\tau \over |K|} \sum_{e\in \del K} |e| f(v^n_K) \omega_{K, e} 
+ \big( f(v^n_K) + h(v^n_K) \big) \tau \theta_K.
\ee
For the sake of stability, we impose the CFL stability condition  
\bel{eq:CFL1}
{\tau p_K \over |K|} \max_{\Tcal_h, \del K} \sup_{-1 \leq u, v\leq 1 \atop u \neq v} {{{f_{e,K}(u,v)- f_{e,K}(v,v)}\over{u- v}}} \omega_{K, e} 
\leq {1\over 2},
\ee
as well as the source stability condition 
\bel{eq:CFL2}  
\tau \max_{\Tcal_h} \theta_K  \max_{-1\leq u \leq 1}\big  (| f'(u) + h'(u) | \big) 
< {1 \over 2}. 
\ee 

Now we are ready to state our convergence result.  

\begin{theorem}
\label{thm:main}
Consider the Cauchy problem for the balance law \eqref{eq:Nmodel} posed on the domain $\Omega$
under the assumption \eqref{atr:assumption1} and \eqref{atr:assumption2}. Impose the initial condition \eqref{eq:initial} with $v: [0, +\infty) \times \Omega \to [-1, 1]$ in $L^1(\Omega)$. Let $\Tcal_h$ be a triangulation and $\tau=\tau(h)$ be the time increment, satisfying 
\bel{eq:time-step}
\tau \to 0, \qquad {h^2 \over  \tau} \to 0, \qquad as~ h \to 0.
\ee 
Let $f_{K, e}$ be a family of numerical flux satisfying the consistency, conservation and monotonicity conditions in \eqref{eq:consis22}--\eqref{eq:monoto22} 
and satisfies the CFL condition \eqref{eq:CFL1} and the stability condition \eqref{eq:CFL2}. 
Then the discrete scheme \eqref{eq:fv} uniquely defines the family of approximate solution $v^n_K$. 
By defining a piecewise constant function $v^h : \RR_+ \times \Omega \to \RR$ by
\bel{eq:piecewise-sol}
v_h(t, x) :=v^n_K, \quad n\tau\leq t< (n+1)\tau,
\quad x\in K,
\ee
then the sequence $v_h: [0, +\infty) \times \Omega \to [-1, 1]$ is uniformly bounded in 
$L_\loc^\infty\big( [0, +\infty), L^1(\Omega)\big)$ 
and converges almost everywhere to an entropy solution $v: [0, +\infty) \times \Omega \to [-1, 1]$ (in the sense of Definition \ref{def-entropy})
$v \in L_\loc^\infty\big([0, +\infty), L^1(\Omega) \big)$. 
\end{theorem}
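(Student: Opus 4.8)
The plan is to follow the measure-valued solution strategy of DiPerna \cite{DiPerna}, as adapted to the finite volume framework in \cite{ALO}, while accounting for the two features that distinguish the present setting from that reference: the flux vector field is \emph{not} geometry compatible (so constant states are not solutions and genuine source terms survive in the scheme), and the spatial slices are non-compact. The argument proceeds in four stages: a discrete maximum principle, discrete entropy inequalities, a weak BV (entropy dissipation) estimate providing compactness, and finally the reduction of the resulting Young measure to a Dirac mass.

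First I would check that the scheme \eqref{eq:fv} is well defined and preserves the interval $[-1,1]$. Writing the update as $v^{n+1}_K = H\big(v^n_K, (v^n_{K_e})_e\big)$, the monotonicity properties \eqref{eq:monoto22} together with the CFL condition \eqref{eq:CFL1} and the source condition \eqref{eq:CFL2} make $H$ nondecreasing in each of its arguments. The crucial point, replacing the invariance of constants available in the geometry-compatible case, is that the states $v \equiv \pm 1$ are fixed points of $H$: the consistency property \eqref{eq:consis22} makes the numerical-flux sum cancel the flux-correction sum when all arguments equal $\pm 1$, while Assumption \ref{atr:assumption1} forces $\big(f(\pm1)+h(\pm1)\big)\tau\theta_K = 0$. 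Monotonicity then yields $-1 \le v^n_K \le 1$ for all $n,K$, hence the uniform $L^\infty$ bound, together with an $L^1_{\loc}$ bound obtained by summing the scheme against the volumes $|K|$ and using the boundedness of the geometric weights $\omega_{K,e}$ and $\theta_K$.

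Next I would derive the discrete entropy inequalities. For any convex entropy pair $(U,F)$, the monotonicity of $H$ yields, by the standard comparison argument, a cell inequality of the form $U(v^{n+1}_K) \le U(v^n_K) - \tfrac{\tau}{|K|}\sum_e |e|\, F_{K,e}(v^n_K,v^n_{K_e})\,\omega_{K,e} + (\text{source and geometric terms})$ with a consistent numerical entropy flux $F_{K,e}$. The heart of the compactness argument is then a weak BV estimate: summing the entropy balance over $K$ and $n$ and exploiting the slack in \eqref{eq:CFL1} (the factor $1/2$), one extracts a bound on the numerical entropy dissipation, i.e. on a weighted quantity of the type $\sum_{n,K,e}\tau|e|\,\omega_{K,e}\,(v^n_{K_e}-v^n_K)^2$, controlled by the initial data. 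I expect this weak BV estimate, carried out on non-compact slices and in the presence of the additional geometric source terms, to be the main obstacle: one must verify that the source and flux-correction contributions are dominated and, via Cauchy--Schwarz, that the time-step constraint $h^2/\tau \to 0$ in \eqref{eq:time-step} is precisely what drives the discretization errors of the geometric weights and of the source to zero in the limit.

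Finally I would invoke the Young measure framework. By the uniform $L^\infty$ bound, the piecewise constant functions $v_h$ defined in \eqref{eq:piecewise-sol} generate, along a subsequence, a Young measure $\nu_{t,x}$ supported in $[-1,1]$. Passing to the limit in the discrete entropy inequalities, and using the weak BV estimate to show that the numerical dissipation and the error terms vanish, I would establish that $\nu$ is a measure-valued entropy solution of \eqref{eq:Nmodel} in the sense of DiPerna, with initial trace $\delta_{v_0}$. DiPerna's reduction theorem \cite{DiPerna} for entropy measure-valued solutions then forces $\nu_{t,x} = \delta_{v(t,x)}$ for a function $v$ satisfying Definition \ref{def-entropy}; the collapse to a Dirac mass upgrades the weak convergence of $v_h$ to almost-everywhere convergence, and the $L^1_{\loc}$ bound is inherited in the limit. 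Since the construction simultaneously produces the solution, it furnishes existence and stability for this class of balance laws as a byproduct.
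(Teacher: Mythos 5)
Your proposal follows essentially the same route as the paper: a discrete maximum principle obtained from monotonicity together with the CFL and source stability conditions and the fact that $\pm 1$ are equilibria by Assumption \ref{atr:assumption1}, then discrete entropy inequalities, an entropy dissipation estimate, and finally the DiPerna measure-valued solution framework to reduce the Young measure to a Dirac mass and upgrade to almost-everywhere convergence. The only cosmetic difference is that the paper organizes the dissipation bound around the convex decomposition $v^{n+1}_K = \tfrac{1}{p_K}\sum_{e} |e|\, v^{n+1}_{K,e}$ rather than a weak BV estimate on neighboring cell differences, but the role played is the same.
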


The above theorem implies the existence and stability of weak solutions for our model.

\begin{corollary} 
Consider the Cauchy problem for the balance law \eqref{eq:Nmodel} posed on the domain $\Omega$
under the assumption \eqref{atr:assumption1} and \eqref{atr:assumption2}. Impose the initial condition \eqref{eq:initial} with $v_0 \in L^1(\Omega)$ and $\| v_0 \|_{L^\infty(\Omega)} \leq 1$.
Then there exists an entropy solution $v: [0, +\infty) \times \Omega \to [-1, 1]$ in $L_\loc^\infty\big([0, +\infty), L^1(\Omega) \big)$ to this problem. 
\end{corollary}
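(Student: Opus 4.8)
The plan is to establish convergence by the measure-valued (Young measure) method, following the geometric framework of \cite{ALO} and the reduction theory of \cite{DiPerna}, while adapting both to the two features that distinguish the present setting: the spacelike slices are non-compact (the spatial variable ranges over the half-line $r > 2M$) and the flux vector is not geometry-compatible. The argument proceeds in four stages --- a discrete maximum principle, discrete entropy inequalities, a weak BV estimate on the entropy dissipation, and the reduction of the limiting Young measure to a Dirac mass.

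First I would show that the scheme is well defined and obeys the discrete maximum principle $v^n_K \in [-1,1]$. The key is to recast the update \eqref{eq:fv} in the monotone incremental form $v^{n+1}_K = H_K\big(v^n_K; (v^n_{K_e})_{e \in \del K}\big)$ and to verify, using the geometry-adapted monotonicity \eqref{eq:monoto22} (where the signed weights $\omega_{K,e}$ fix the upwind direction) together with the CFL condition \eqref{eq:CFL1} and the source stability condition \eqref{eq:CFL2}, that $H_K$ is nondecreasing in each argument. The invariance of $[-1,1]$ then follows by checking that $\pm 1$ are fixed points of $H_K$: the flux differences telescope by consistency \eqref{eq:consis22} and conservation \eqref{eq:conser22}, while the source contribution in \eqref{eq:fv} vanishes at $v = \pm 1$ precisely because $f(\pm 1) + h(\pm 1) = 0$ in Assumption \ref{atr:assumption1}. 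This simultaneously furnishes the uniform $L^\infty$ bound and, by the standard monotone-scheme argument, $L^1$ stability and the uniform bound in $L^\infty_\loc\big([0,\infty), L^1(\Omega)\big)$.

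Next, for every convex entropy pair $(U,F)$ I would derive a discrete entropy inequality. Building a numerical entropy flux consistent with $F$ out of $f_{K,e}$ in the usual way and exploiting the monotonicity of $H_K$, one obtains at each cell an inequality bounding $U(v^{n+1}_K)$ by the discrete transport of $U(v^n_K)$ plus the discretized entropy source plus a nonnegative dissipation defect; the geometric weights $\omega_{K,e}$, $\theta_K$ and the source $f + h$ are carried through, and the normalization $U(0) = 0$ matches Definition \ref{def-entropy}. The technical heart, and the step I expect to be the main obstacle, is the accompanying weak BV estimate: after summing over cells and time levels against a fixed compactly supported test-function, one must bound the accumulated dissipation defect by a quantity that tends to $0$ as $h \to 0$, and this is exactly where the scaling $h^2/\tau \to 0$ from \eqref{eq:time-step} is used. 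Two model-specific difficulties arise here. Because the slices are non-compact, the estimate must be localized on compact subsets of $\Omega$ bounded away from the horizon $r = 2M$, with the geometric source contributions controlled by the $L^\infty$ and $L^1$ bounds and the smoothness of the coefficients for $r > 2M$; and because $X$ is not divergence-free, the discrete divergence of the geometry produces genuine lower-order terms that must be absorbed into the estimate rather than cancelling as in the geometry-compatible case of \cite{ALO}. Assumptions \ref{atr:assumption1} and \ref{atr:assumption2} on the signs of $f+h$ and $f'$ are used throughout to keep these source and flux contributions under control.

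Finally I would pass to the limit. The uniform $L^\infty$ bound produces a Young measure $(\nu_{t,x})$ supported in $[-1,1]$ that represents the weak-$\star$ limits of $f(v_h)$, $U(v_h)$ and the like. Passing to the limit in the discrete entropy inequalities and using the weak BV estimate to annihilate the dissipation defect shows that $\nu$ is an entropy measure-valued solution of \eqref{eq:Nmodel} in the curved, source-dependent generalization of DiPerna's sense. A reduction argument --- the analogue of DiPerna's uniqueness theorem \cite{DiPerna} adapted to the geometry and the source --- then forces $\nu_{t,x} = \delta_{v(t,x)}$ for almost every $(t,x)$, which is equivalent to the almost-everywhere convergence $v_h \to v$ and identifies the limit as an entropy solution in the sense of Definition \ref{def-entropy}. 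The stated Corollary is then immediate: for given data $v_0 \in L^1(\Omega)$ with $\|v_0\|_{L^\infty} \le 1$, one simply selects an admissible family of triangulations and numerical fluxes and applies the theorem.
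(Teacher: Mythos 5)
Your proposal is correct and follows the paper's own route: the Corollary is obtained in the paper simply by invoking Theorem \ref{thm:main}, which is exactly your closing step, and your sketch of that theorem's proof (discrete maximum principle via the CFL and source stability conditions together with $f(\pm 1)+h(\pm 1)=0$, discrete entropy inequalities with a dissipation estimate controlled by $h^2/\tau \to 0$, and the Young measure/DiPerna reduction) matches the argument of Sections 5.3--5.4. No substantive difference to report.
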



\subsection{Discrete entropy inequalities}

Entropy inequalities play a key role in the proof of Theorem \ref{thm:main}.

\begin{proposition}
\label{p:maximum}
Under the assumptions in Theorem~\ref{thm:main}, the finite volume approximations satisfy the discrete maximum principle:
\bel{eq:maximum}
\max_{\Tcal_h}|v^n_K|
\leq 1.
\ee
\end{proposition}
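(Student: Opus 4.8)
The plan is to run the standard monotone (order-preserving) scheme argument, adapted to the geometric weights appearing here. I would exhibit the update map $v^{n+1}_K=\mathcal H_K\big(v^n_K;(v^n_{K_e})_{e\in\partial K}\big)$ defined by the explicit scheme \eqref{eq:fv}, show that it is nondecreasing in each of its arguments on $[-1,1]$, and that it leaves the two constant states $v\equiv\pm1$ fixed. The bound $|v^{n+1}_K|\le1$ then follows by sandwiching the update between these two constant states, and the Proposition follows by induction on $n$. The base case $|v^0_K|\le1$ is immediate from the definition of the initial values as cell averages of $v_0$ together with $\|v_0\|_{L^\infty(\Omega)}\le1$.

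First I would recast the scheme in a form adapted to monotonicity. Using the consistency relation \eqref{eq:consis22}, $f(v^n_K)=f_{K,e}(v^n_K,v^n_K)$, the numerical-flux term and the geometric part of the source in \eqref{eq:fv} combine into a single difference, giving
\[
v^{n+1}_K= v^n_K-\frac{\tau}{|K|}\sum_{e\in\partial K}|e|\,\omega_{K,e}\big(f_{K,e}(v^n_K,v^n_{K_e})-f_{K,e}(v^n_K,v^n_K)\big)+\tau\theta_K\big(f(v^n_K)+h(v^n_K)\big).
\]
Evaluating at a constant state $v^n_K=v^n_{K_e}=c$, the flux difference vanishes identically and one is left with $\mathcal H_K(c;c,\dots,c)=c+\tau\theta_K(f(c)+h(c))$. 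Assumption \eqref{eq:assum-flux}, namely $f(\pm1)+h(\pm1)=0$, then gives $\mathcal H_K(\pm1;\pm1,\dots,\pm1)=\pm1$, so the states $\pm1$ are exactly preserved. This is the discrete counterpart of the fact underlying Proposition \ref{prop:max}: the algebraic source $\tfrac{2M}{r^2}(f(v)+h(v))$ appearing in \eqref{eq:Nmodel} vanishes at $v=\pm1$ precisely by \eqref{eq:assum-flux}, so $\pm1$ are equilibria of the value evolution.

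It remains to prove that $\mathcal H_K$ is nondecreasing in each argument. Monotonicity in a neighbor value is immediate, since $\partial_{v^n_{K_e}}\mathcal H_K=-\tfrac{\tau|e|}{|K|}\omega_{K,e}\,\partial_v f_{K,e}\ge0$ by the monotonicity property \eqref{eq:monoto22}. Monotonicity in the diagonal value $v^n_K$ is the crux, and the main obstacle is the geometric-source increment, whose coefficient $\tfrac{\tau}{|K|}\sum_{e}|e|\omega_{K,e}f'(v^n_K)$ has indefinite sign because $f'$ changes sign (Assumption \eqref{eq:f'}) and the weights $\omega_{K,e}$ vary in sign. The point is that this term must not be estimated on its own, but combined through consistency with the numerical-flux increment, so that the two together reduce to a single second-slot increment proportional to $\partial_v f_{K,e}(v^n_K,\eta_e)(v^n_{K_e}-v^n_K)$ with coefficient of definite sign; the conservation property \eqref{eq:conser22} then lets one rewrite the required quantity $-\omega_{K,e}\partial_v f_{K,e}$ as the first-slot quantity $\omega_{K_e,e}\partial_u f_{K_e,e}$ that is controlled, uniformly over $\Tcal_h$, by the CFL condition \eqref{eq:CFL1}. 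Carrying out this bookkeeping yields nonnegative weights $c_e\ge0$ with $\sum_e c_e\le\tfrac12$, while the algebraic source $\tau\theta_K(f'+h')$ is kept within the remaining $\tfrac12$ budget by the source-stability condition \eqref{eq:CFL2}; together these leave a nonnegative diagonal coefficient $1-\tfrac12-\tfrac12\ge0$. Consequently $-1=\mathcal H_K(-1;\dots)\le v^{n+1}_K\le\mathcal H_K(1;\dots)=1$, and the induction closes.
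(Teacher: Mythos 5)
Your overall strategy is, up to packaging, the paper's own: reduce the scheme via the consistency property \eqref{eq:consis22} so that the geometric part of the source cancels against the diagonal flux values, observe that the constant states $\pm 1$ are preserved because $f(\pm1)+h(\pm1)=0$ (Assumption \ref{atr:assumption1}), and spend the CFL budget \eqref{eq:CFL1} and the source--stability budget \eqref{eq:CFL2} as $\frac12+\frac12$. The base case and the neighbour monotonicity $\del_{v^n_{K_e}}\mathcal H_K=-(\tau|e|/|K|)\,\omega_{K,e}\,\del_v f_{K,e}\ge 0$ are fine.

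The gap is in the diagonal step. To conclude by sandwiching, $v^{n+1}_K\le \mathcal H_K(1;1,\dots,1)=1$, you need $\mathcal H_K$ to be nondecreasing in its \emph{first} argument, i.e.
\[
\del_{v^n_K}\mathcal H_K \;=\; 1-\frac{\tau}{|K|}\sum_{e}|e|\,\omega_{K,e}\Big(\del_u f_{K,e}(v^n_K,v^n_{K_e})-\del_u f_{K,e}(v^n_K,v^n_K)-\del_v f_{K,e}(v^n_K,v^n_K)\Big)+\tau\theta_K(f'+h')(v^n_K)\;\ge\;0 .
\]
The bracket contains the difference $\del_u f_{K,e}(v^n_K,v^n_{K_e})-\del_u f_{K,e}(v^n_K,v^n_K)$ of two first--slot derivatives evaluated at \emph{different} second arguments; it has no definite sign, and the CFL condition \eqref{eq:CFL1} --- which only bounds difference quotients anchored on the diagonal, $\big(f_{K,e}(u,v)-f_{K,e}(v,v)\big)/(u-v)$ --- does not control it pointwise. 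So ``carrying out this bookkeeping'' does not actually produce nonnegative weights $c_e$ with $\sum_e c_e\le\frac12$ for the derivative. The paper never claims monotonicity of the update map in $v^n_K$: it uses precisely the mean--value identity you mention in passing (the flux--plus--geometric--source increment equals $\del_v f_{K,e}(v^n_K,\eta_e)(v^n_{K_e}-v^n_K)$ times a sign--definite weight) to put the scheme in the incremental form \eqref{eq:301}, $v^{n+1}_K=A^n_K v^n_K+\sum_e A^n_{K,e}v^n_{K_e}+B_K\big(f(v^n_K)+h(v^n_K)\big)$ with state--dependent coefficients satisfying $A^n_{K,e}\ge0$, $A^n_K+\sum_e A^n_{K,e}=1$ and $A^n_K\ge\frac12$ by \eqref{eq:CFL1}; it then bounds directly, replacing each $v^n_{K_e}$ by $1$ and estimating $B_K\big((f+h)(v^n_K)-(f+h)(1)\big)\le B_K\max|f'+h'|\,(1-v^n_K)\le A^n_K(1-v^n_K)$ by \eqref{eq:CFL2}. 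You have already derived all the needed ingredients; the repair is to run the estimate on the incremental form you wrote down rather than through a monotonicity claim about $\mathcal H_K$.
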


\begin{proof}
We first assume $\max_{\Tcal_h}|v^n_K|\leq 1$ for all elements $K\in \Tcal_h$. We rewrite \eqref{eq:fv} as
$$
\aligned
v^{n+1}_K=&\Big({1+ {\tau \over |K|}\sum_{e\in \del K}{{f_{K,e}(v^n_K, v^n_{K_e})- f_{K,e}(v^n_K, v^n_K})\over{v^n_{K_e}- v^n_K}}|e| \omega_{K, e}}\Big)v^n_K
\\
                   &+{\tau \over |K|}\sum_{e\in \del K}{-{{f_{K,e}(v^n_K, v^n_{K_e})- f_{K,e}(v^n_K, v^n_K)}\over{v^n_{K_e}- v^n_K}}}|e| \omega_{K, e} v^n_{K_e}
                   \\
                   &-{\tau \over |K|}\sum_{e \in \del K} |e| \omega_{K, e} f_{K, e}(v^n_K, v^n_K) + {\tau \over |K|} \sum_{e\in \del K} |e| f(v^n_K) \omega_{K, e}
                   \\
                   & + \tau \big( f(v^n_K) + h(v^n_K) \big)\theta_K
\endaligned
$$
which with an obvious notation we express in the form
\bel{eq:301}
v^{n+1}_K 
 = A^n_K v^n_K+ \sum_{e \in \del K} A^n_{K, e} v^n_{K_e} + B_K \big( f(v^n_K) + h(v^n_K) \big). 
\ee
We have observed here that $A^n_K+ \sum_{e \in \del K} A^n_{K, e}=1$.

The monotonicity of $f_{K,e}$ implies 
$$
A^n_{K, e} = 
-{\tau \over |K|}{{f_{K,e}(v^n_K, v^n_{K_e})- f_{K,e}(v^n_K, v^n_K)}\over{v^n_{K_e}- v^n_K}}|e| \omega_{K, e} 
\geq 0,
$$
while the CFL condition \eqref{eq:CFL1} gives us 
$$
\aligned
 \sum_{e \in \del K} A^n_{K, e} 
=
-{\tau \over |K|} \sum_{e\in \del K}{{f_{K,e}(v^n_K, v^n_{K_e})- f_{K,e}(v^n_K, v^n_K}) \over {v^n_{K_e}- v^n_K}}|e| \omega_{K, e} 
\leq{1\over 2}. 
\endaligned
$$
Therefore, we have  $A^n_K \geq {1\over 2}$. 

On the other hand, since $f(1) + h(1) = 0$ and $v^n_{K_e} \leq 1$ we have
$$
\aligned
v^{n+1}_K\leq &  A^n_K v^n_K+ \sum_{e \in \del K} A^n_{K, e} + B_K \Big(f(v^n_K) + h(v^n_K) - f(1) - h(1) \Big)
\\
                \leq &  A^n_K v^n_K + (1- A^n_K)- B_K \max_{-1\leq u \leq 1}|f'(u) + h'(u)| (v^n_K - 1)
                \\
                \leq & 1,
\endaligned
$$
where we used the source stability condition \eqref{eq:CFL2}. 
Similarly, we find 
$$
\aligned
v^{n+1}_K\geq & A^n_K v^n_K- \sum_{e \in \del K} A^n_{K, e} + B_K \Big( f(v^n_K) + h(v^n_K) - f(-1) - h(-1) \Big)
\\
                \geq & A^n_K v^n_K - (1- A^n_K)- B_K \max_{-1\leq u \leq 1}|f'(u) + h'(u)| (v^n_K + 1)
                \\
                \geq & -1.  \hskip5.cm \qedhere
\endaligned 
$$ 
\end{proof}


Now, we state a convex decomposition of $v^{n+1}_K$, which plays an important role in deriving the discrete entropy inequalities given below. For each $K$ and $e$, we define
\begin{subequations}
\bel{eq:conv-decom1}
\widetilde{v}^{n+1}_{K, e}
:= 
v^n_K- {\tau p_K \omega_{K, e} \over |K|} \Big( f_{K,e}(v^n_K, v^n_{K_e})- f_{K,e}(v^n_K, v^n_K) \Big),
\ee
and 
\bel{eq:conv-decom2}
v^{n+1}_{K, e}
:= 
\widetilde{v}^{n+1}_{K, e}- {\tau \over |K|}\sum_{e \in \del K} \omega_{K, e} |e| f_{K, e}(v^n_K, v^n_K)
+ {\tau p_K \over |K|} f(v^n_K) \omega_{K, e} + \tau \theta_K \Big( f(v^n_K) + h(v^n_K) \Big).
\ee
In view of \eqref{eq:fv} and the consistency property of $f_{K, e}$, we have
\bel{eq:conv-decom3}
v^{n+1}_K
=
{1 \over p_K}\sum_{e \in \del K} |e| v^{n+1}_{K, e}.
\ee
\end{subequations}

The following lemma provides a standard result concerning the existence of discrete entropy flux terms and an entropy inequality relating $\widetilde{v}^{n+1}_{K, e}$ and $v^{n+1}_K$. We omit the proof and refer to \cite{ALO} and the references therein.

\begin{lemma}[Discrete entropy inequalities] 
Let $(U, F)$ be a convex entropy pair. Then there exists a family of discrete entropy flux functions $F_{K, e}: \RR^2 \to \RR$ satisfying the following conditions:  
\begin{subequations}
\bei

\item Consistency with the entropy flux $F$:
         \bel{eq:consistF}
         F_{K, e}(u, u) = F(u), \qquad u\in \RR.
\ee

\item Conservation property:
         \bel{eq:conservF}
         F_{K, e}(u, w)  = F_{K_e, e}(w, u), 
\qquad u, w\in \RR.
\ee

\item Discrete entropy inequality:
\bel{eq:entropyF}
\aligned
&         U(\widetilde{v}^{n+1}_{K, e})- U(v^n_K) + {\tau p_K \omega_{K, e} \over |K|}\Big( F_{K,e}(v^n_K, v^n_{K_e})- F_{K,e}(v^n_K, v^n_K) \Big)
\\
&         \leq 
         {\tau \theta_K} \Big( f(v^n_K) + h(v^n_K) \Big) U'(v^n_K).
\endaligned
\ee
\eei 
\end{subequations}
\end{lemma}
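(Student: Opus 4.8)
The plan is to recognize \eqref{eq:entropyF} as the geometric counterpart of the classical cell-entropy inequality for monotone finite volume schemes (in the spirit of Crandall--Majda and Cockburn--Coquel--LeFloch, and as carried out in the divergence-free setting of \cite{ALO}), and to reduce the general convex entropy to the family of Kruzkov entropies by superposition. The central object is the two-point map
\[
H_{K,e}(a,b) := a - {\tau p_K \omega_{K,e} \over |K|}\big( f_{K,e}(a,b) - f_{K,e}(a,a) \big),
\]
so that $\widetilde v^{n+1}_{K,e} = H_{K,e}(v^n_K, v^n_{K_e})$ by \eqref{eq:conv-decom1}. First I would check that, under the monotonicity conditions \eqref{eq:monoto22} and the CFL condition \eqref{eq:CFL1}, the map $H_{K,e}$ is nondecreasing in each of its two arguments: monotonicity in $b$ is immediate from $\omega_{K,e}\del_v f_{K,e}\le 0$, while monotonicity in $a$ is exactly what \eqref{eq:CFL1} is designed to guarantee. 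Note also that $H_{K,e}(a,a)=a$, so constants are preserved.

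Next I would construct the numerical entropy flux. For a Kruzkov entropy $U_k(v) = |v-k|$, whose entropy flux is $F_k(v) = \sgn(v-k)\big(f(v)-f(k)\big)$ and is compatible with \eqref{eq:entropy}, I would set
\[
F^{(k)}_{K,e}(u,w) := f_{K,e}(u\vee k, w\vee k) - f_{K,e}(u\wedge k, w\wedge k),
\]
and check consistency $F^{(k)}_{K,e}(u,u) = f(u\vee k)-f(u\wedge k) = F_k(u)$ using \eqref{eq:consis22}, together with conservation, which follows directly from \eqref{eq:conser22}. For a general convex pair $(U,F)$ normalized by $U(0)=0$, I would decompose $U$ into a superposition of Kruzkov entropies against the nonnegative measure $U''(k)\,dk$ plus an affine part, and define $F_{K,e}$ as the corresponding superposition of the $F^{(k)}_{K,e}$ together with the affine contribution built from $f_{K,e}$ itself; since $U''\ge 0$, all the monotonicity and comparison properties are inherited, and \eqref{eq:consistF}--\eqref{eq:conservF} follow by integrating the Kruzkov identities.

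The discrete entropy inequality then follows from monotonicity. For fixed $k$, using $H_{K,e}(a,a)=a$ and monotonicity in both slots one obtains $\widetilde v^{n+1}_{K,e}\vee k \le H_{K,e}(a\vee k, b\vee k)$ and $\widetilde v^{n+1}_{K,e}\wedge k \ge H_{K,e}(a\wedge k, b\wedge k)$ with $a=v^n_K$, $b=v^n_{K_e}$; subtracting and expanding $H_{K,e}$ yields
\[
|\widetilde v^{n+1}_{K,e}-k| - |v^n_K-k| + {\tau p_K\omega_{K,e}\over|K|}\big(F^{(k)}_{K,e}(v^n_K,v^n_{K_e})-F^{(k)}_{K,e}(v^n_K,v^n_K)\big)\le 0.
\]
Superposing against $U''(k)\,dk$ gives the purely convective inequality for the general pair $(U,F)$. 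The geometric source term on the right-hand side of \eqref{eq:entropyF} is then incorporated through the convex decomposition \eqref{eq:conv-decom2}--\eqref{eq:conv-decom3}: the source is cell-local and proportional to $f(v^n_K)+h(v^n_K)$, so it enters, through the tangent-line (subgradient) inequality for the convex $U$, as the linear term $U'(v^n_K)\,\tau\theta_K\big(f(v^n_K)+h(v^n_K)\big)$, the remainder being controlled under \eqref{eq:CFL2} together with the structural Assumption \ref{atr:assumption1} (so that $f+h$ vanishes at $\pm1$ and the maximum principle of Proposition \ref{p:maximum} applies).

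The main obstacle, and the reason the argument departs from the divergence-free framework of \cite{ALO}, is the variable-coefficient, non-geometry-compatible structure: the weights $\omega_{K,e}$ carry a sign and the flux is not divergence free, so one must track these signs carefully to ensure both that $H_{K,e}$ is genuinely monotone (which is where \eqref{eq:CFL1}, written with $\omega_{K,e}$, is essential) and that the numerical entropy flux is correctly oriented edge by edge. The second delicate point is the clean separation of the convective update $\widetilde v^{n+1}_{K,e}$ from the source and the geometric correction terms in \eqref{eq:conv-decom2}, so that the source contributes precisely the right-hand side of \eqref{eq:entropyF} while keeping the interval $[-1,1]$ invariant; here Assumptions \ref{atr:assumption1}--\ref{atr:assumption2} and the discrete maximum principle are exactly what make the bookkeeping close.
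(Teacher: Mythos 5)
The paper does not actually prove this lemma: it is stated as ``a standard result'' and the proof is deferred to \cite{ALO}, so there is no internal argument to compare against. Your reconstruction follows exactly the route that citation points to (monotone two--point update map $H_{K,e}$, Kruzkov entropies $|v-k|$ with numerical entropy flux $f_{K,e}(u\vee k,w\vee k)-f_{K,e}(u\wedge k,w\wedge k)$, superposition against the measure $U''(k)\,dk$), and for the purely convective part this is correct: consistency and conservation of $F_{K,e}$ follow as you say, and the comparison argument yields
\[
U(\widetilde v^{n+1}_{K,e})-U(v^n_K)+\frac{\tau p_K\omega_{K,e}}{|K|}\Big(F_{K,e}(v^n_K,v^n_{K_e})-F_{K,e}(v^n_K,v^n_K)\Big)\le 0 .
\]

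Two points need tightening. The substantive one concerns the source term. Your argument produces the inequality above with \emph{zero} on the right-hand side, and you propose to generate the term $\tau\theta_K\big(f(v^n_K)+h(v^n_K)\big)U'(v^n_K)$ ``through the convex decomposition \eqref{eq:conv-decom2}--\eqref{eq:conv-decom3}''. But \eqref{eq:entropyF} is an inequality for $\widetilde v^{n+1}_{K,e}$, which by \eqref{eq:conv-decom1} contains no source contribution at all; the source only enters $v^{n+1}_{K,e}$, and the passage from \eqref{eq:entropyF} to \eqref{eq:311} is the tautological addition of $R^{n+1}_{K,e}=U(v^{n+1}_{K,e})-U(\widetilde v^{n+1}_{K,e})$ to both sides. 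So your mechanism cannot place that term on the right of \eqref{eq:entropyF}; and since under Assumption \ref{atr:assumption2} the right-hand side of \eqref{eq:entropyF} can be strictly negative ($f+h<0$ on $(-1,1)$ while $U'$ may be positive), the zero-right-hand-side inequality does not imply \eqref{eq:entropyF} as printed. What you actually establish is the sharper convective inequality; to feed the chain leading to \eqref{eq:312} you should either state the lemma with $0$ on the right, or state it for $v^{n+1}_{K,e}$ and generate the source term by the tangent-line inequality in the step from $\widetilde v^{n+1}_{K,e}$ to $v^{n+1}_{K,e}$, absorbing the quadratic error into the remainder --- but then say explicitly which statement is being proved. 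The smaller point: monotonicity of $H_{K,e}$ in its first argument is not ``immediate'' from \eqref{eq:CFL1} as written, since \eqref{eq:CFL1} bounds divided differences of $f_{K,e}$ in the first slot while the coefficient of $v^n_{K_e}$ in $\widetilde v^{n+1}_{K,e}$ is a divided difference in the second slot; one needs the conservation property \eqref{eq:conser22} (which exchanges the slots between $K$ and $K_e$) together with comparability of $p_K/|K|$ across neighbouring elements. The same leap occurs in the paper's own proof of Proposition \ref{p:maximum}, but it deserves a sentence.
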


Equivalently, \eqref{eq:entropyF} can be written in terms of $v^{n+1}_{K,e}$ and $v^n_K$ as
\bel{eq:311}
\aligned
& U(v^{n+1}_{K, e})- U(v^n_K) + {\tau p_K \omega_{K, e} \over |K|} \Big( F_{K,e}(v^n_K, v^n_{K_e})- F_{K,e}(v^n_K, v^n_K) \Big)
\\
& \leq  {\tau \theta_K} \Big( f(v^n_K) + h(v^n_K) \Big) U'(v^n_K) + R^{n+1}_{K, e},
\endaligned
\ee
with $R^{n+1}_{K, e}:=U(v^{n+1}_{K, e})- U(\widetilde{v}^{n+1}_{K, e})$.
The entropy dissipation estimate below will serve to establish the convergence result.

\begin{proposition}[Discrete entropy balance law] 
Let $U: \RR \to \RR$ be a strictly convex function and set $\alpha:=\inf_{v \in [-1,1]} U''(v)$. Then for all $n$ one has 
\bel{eq:312}
\aligned
& \sum_{K \in \Tcal_h}|K|U(v^{n+1}_K) + {\alpha \over 2}\sum_{K \in \Tcal_h, e \in \del K}{{|e||K|}\over{p_K}}|v^{n+1}_{K,e}-v^{n+1}_K|^2
\\
& \leq 
\sum_{K \in \Tcal_h}|K|U(v^n_K) + \sum_{K \in \Tcal_h, e \in \del K}\tau |e| \omega_{K, e} F_{K,e}(v^n_K, v^n_K) + \sum_{K \in \Tcal_h} {\tau |K| \theta_K} \Big( f(v^n_K) + h(v^n_K) \Big) U'(v^n_K)
\\
& \quad
+ \sum_{K \in \Tcal_h, e \in \del K}{|e||K| \over p_K}R^{n+1}_{K, e}.
\endaligned
\ee
\end{proposition}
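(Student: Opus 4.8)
The plan is to combine the convex decomposition \eqref{eq:conv-decom3} with a \emph{quantitative} version of Jensen's inequality and then to sum the per-edge entropy inequality \eqref{eq:311} over the whole triangulation $\Tcal_h$. First I would record the elementary convexity estimate: if $U'' \geq \alpha$ on $[-1,1]$, then for every convex combination $\bar w = \sum_e \lambda_e w_e$ (with $\lambda_e \geq 0$, $\sum_e \lambda_e = 1$) one has
\[
\sum_e \lambda_e U(w_e) \geq U(\bar w) + {\alpha \over 2}\sum_e \lambda_e |w_e - \bar w|^2.
\]
This follows by applying the pointwise bound $U(w) \geq U(\bar w) + U'(\bar w)(w - \bar w) + {\alpha \over 2}(w-\bar w)^2$ (valid because $w \mapsto U(w) - {\alpha \over 2}w^2$ is convex) and averaging, the linear term dropping out since $\sum_e \lambda_e(w_e - \bar w)=0$. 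I apply this with $w_e = v^{n+1}_{K,e}$, $\bar w = v^{n+1}_K$ and weights $\lambda_e = |e|/p_K$, which is exactly the combination in \eqref{eq:conv-decom3}; multiplying by $|K|$ gives, on each element,
\[
|K|\,U(v^{n+1}_K) + {\alpha \over 2}\sum_{e\in\del K}{|e||K| \over p_K}\,|v^{n+1}_{K,e}-v^{n+1}_K|^2 \;\leq\; \sum_{e\in\del K}{|e||K| \over p_K}\,U(v^{n+1}_{K,e}).
\]

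Next I would insert the discrete entropy inequality \eqref{eq:311} to bound each $U(v^{n+1}_{K,e})$ by $U(v^n_K)$, the entropy-flux difference, the source contribution, and the remainder $R^{n+1}_{K,e}$. Substituting this into the right-hand side, the weights $|e|/p_K$ telescope cleanly: since $\sum_{e\in\del K}|e| = p_K$, the $U(v^n_K)$ terms collapse to $|K|\,U(v^n_K)$; the factor ${|e||K| / p_K}$ against ${\tau p_K \omega_{K,e} / |K|}$ reduces to $\tau|e|\omega_{K,e}$ in the flux term; and the source term collapses to $\tau|K|\theta_K\big(f(v^n_K)+h(v^n_K)\big)U'(v^n_K)$. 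This produces the single-element inequality, with a leftover contribution $-\sum_{e}\tau|e|\omega_{K,e}F_{K,e}(v^n_K,v^n_{K_e})$ coming from the ``off-diagonal'' entropy flux.

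Finally I would sum over all $K \in \Tcal_h$. Every term now matches \eqref{eq:312} except the off-diagonal fluxes, which cancel. Indeed, each interior edge $e$ is shared by $K$ and its neighbor $K_e$; the conservation property \eqref{eq:conservF} gives $F_{K,e}(v^n_K,v^n_{K_e}) = F_{K_e,e}(v^n_{K_e},v^n_K)$, while the geometric weight is antisymmetric, $\omega_{K,e} = -\omega_{K_e,e}$, because the outward normals $n_{K,e}$ and $n_{K_e,e}$ are opposite in the definition of $\omega_{K,e}$. Hence the two contributions across $e$ sum to zero, and boundary edges contribute nothing since the spatial flux vanishes on the horizon. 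What survives is precisely \eqref{eq:312}. I expect the \textbf{main obstacle} to be the first step: extracting the quantitative dissipation term ${\alpha \over 2}\sum|\cdot|^2$, which is what upgrades the mere convexity of $U$ into the numerical entropy dissipation later needed for compactness; once the sign carried by $\omega_{K,e}$ is tracked carefully, the remaining telescoping and cancellation are routine bookkeeping.
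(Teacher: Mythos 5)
Your proof is correct and follows essentially the same route as the paper: a quantitative Jensen inequality applied to the convex decomposition \eqref{eq:conv-decom3}, followed by multiplying \eqref{eq:311} by $|e||K|/p_K$, summing, and cancelling the off-diagonal entropy fluxes via conservation and the antisymmetry of $\omega_{K,e}$. The only difference is cosmetic: the paper cites Lemma 3.5 of \cite{CCL2} for the convexity step and leaves the flux cancellation implicit, whereas you prove the former from the pointwise bound $U(w)\geq U(\bar w)+U'(\bar w)(w-\bar w)+\tfrac{\alpha}{2}(w-\bar w)^2$ and spell out the latter.
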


\begin{proof}
By Lemma 3.5 in \cite{CCL2} and the convex decomposition identity \eqref{eq:conv-decom3}, we have
$$
\sum_{K \in \Tcal_h}|K|U(v^{n+1}_K) + {\alpha \over 2}\sum_{K \in \Tcal_h, e \in \del K}{{|e||K|}\over{p_K}}|v^{n+1}_{K,e}-v^{n+1}_K|^2
\leq
\sum_{K \in \Tcal_h, e \in \del K}{{|e||K|}\over{p_K}}U(v^{n+1}_{K,e}).
$$
Next, we multiply by $|e||K|/p_K$ in \eqref{eq:311}, and sum up over all $K$ and $e$,
$$
\aligned
& \sum_{K \in \Tcal_h, e \in \del K}{{|e||K|}\over{p_K}}U(v^{n+1}_{K,e})- \sum_{K \in \Tcal_h}|K|U(v^n_K)- \sum_{K \in \Tcal_h, e \in \del K}{\tau  |e| \omega_{K, e}}F_{K,e}(v^n_K, v^n_K))
\\
& \leq 
\sum_{K \in \Tcal_h} {\tau |K| \theta_K} \Big( f(v^n_K) + h(v^n_K) \Big) U'(v^n_K) + \sum_{K \in \Tcal_h, e \in \del K}{|e||K| \over p_K}R^{n+1}_{K, e}.
\endaligned
$$
This leads us to \eqref{eq:312}.
\end{proof}

For the proofs of the following lemmas, see \cite{ALO} and the references therein for details.
The local entropy inequalities read as follows. 

\begin{lemma} One has
$$
\aligned
& 
{|K|\over p_K}U(v^{n+1}_{K, e})- {|K|\over p_K}U(v^n_K) + {|K_e|\over p_{K_e}}U(v^{n+1}_{K_e, e})- {|K_e|\over p_{K_e}}U(v^n_{K_e}) 
\\
& + \tau \Big( F(v^n_{K_e}) \omega_{K_e, e} - F(v^n_K) \omega_{K, e} \Big)
\\
&\leq 
{\tau |K| \theta_K \over p_K} \Big( f(v^n_K) + h(v^n_K) \Big) U'(v^n_K) + {\tau |K_e| \theta_{K_e} \over p_{K_e}} \Big( f(v^n_{K_e}) + h(v^n_{K_e}) \Big) U'(v^n_{K_e})
\\
& \quad + {|K|\over p_K}R^{n+1}_{K,e}
+ {|K_e|\over p_{K_e}}R^{n+1}_{K_e,e}.
\endaligned
$$
\end{lemma}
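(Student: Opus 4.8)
The plan is to derive this two-cell inequality by adding two suitably weighted copies of the single-cell entropy inequality \eqref{eq:311}: one for the element $K$ and one for its neighbor $K_e$ across the common edge $e$. First I would write \eqref{eq:311} for the pair $(K,e)$ and multiply through by $|K|/p_K$. The factor $|K|/p_K$ cancels the $|K|$ in the denominator of the flux prefactor $\tau p_K\omega_{K,e}/|K|$, leaving the clean term $\tau\omega_{K,e}\big(F_{K,e}(v^n_K,v^n_{K_e})-F_{K,e}(v^n_K,v^n_K)\big)$, and it turns the source and remainder contributions into $\tfrac{\tau|K|\theta_K}{p_K}(f(v^n_K)+h(v^n_K))U'(v^n_K)$ and $\tfrac{|K|}{p_K}R^{n+1}_{K,e}$ respectively. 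Then I would write \eqref{eq:311} for the pair $(K_e,e)$ --- now $K_e$ is the central cell and $K$ is its neighbor across $e$ --- and multiply by $|K_e|/p_{K_e}$, producing the mirror-image terms with the roles of $K$ and $K_e$ interchanged.

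Adding the two inequalities, the four entropy time-difference terms $\tfrac{|K|}{p_K}\big(U(v^{n+1}_{K,e})-U(v^n_K)\big)$ and $\tfrac{|K_e|}{p_{K_e}}\big(U(v^{n+1}_{K_e,e})-U(v^n_{K_e})\big)$, together with the two source terms and the two remainders $R^{n+1}_{K,e}$, $R^{n+1}_{K_e,e}$, appear verbatim and account for everything on both sides except the numerical entropy-flux contributions.

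The crux of the argument is then the cancellation of the interface flux values. After the volume factors are cleared, the combined flux contribution is $\tau\omega_{K,e}\big(F_{K,e}(v^n_K,v^n_{K_e})-F(v^n_K)\big)+\tau\omega_{K_e,e}\big(F_{K_e,e}(v^n_{K_e},v^n_K)-F(v^n_{K_e})\big)$, where I have used consistency \eqref{eq:consistF} to replace $F_{K,e}(v^n_K,v^n_K)=F(v^n_K)$ and $F_{K_e,e}(v^n_{K_e},v^n_{K_e})=F(v^n_{K_e})$. I would then invoke the conservation property \eqref{eq:conservF}, which gives $F_{K,e}(v^n_K,v^n_{K_e})=F_{K_e,e}(v^n_{K_e},v^n_K)$, so the two interface evaluations coincide; call this common value $\Phi$. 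Combined with the geometric antisymmetry $\omega_{K_e,e}=-\omega_{K,e}$ --- which holds because the shared face carries opposite outward unit normals for the two cells, $n_{K_e,e}=-n_{K,e}$, while $x_e$ and $r_e$ are common --- the $\Phi$-terms cancel and one is left with precisely the single-valued entropy-flux difference across $e$, matching the term $\tau\big(F(v^n_{K_e})\omega_{K_e,e}-F(v^n_K)\omega_{K,e}\big)$ on the left-hand side.

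I expect the main obstacle to be purely the bookkeeping of signs and orientations. The cancellation hinges on pairing the \emph{minus-sign-free} conservation law \eqref{eq:conservF} with the antisymmetry of the weights $\omega_{K,e}$ rather than of the fluxes themselves, and on correctly identifying, for the neighbor cell $K_e$, that its partner across $e$ is $K$ and its weight is $\omega_{K_e,e}$. Care is needed here because the conventions in \eqref{eq:consis22}--\eqref{eq:monoto22} place the orientation inside $\omega_{K,e}$ instead of inside $f_{K,e}$; tracking this consistently is exactly what makes the $\Phi$-terms drop out and fixes the precise sign of the resulting conservative flux difference.
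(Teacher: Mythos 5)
Your strategy is exactly the intended one: the paper itself omits the proof and defers to \cite{ALO}, where the argument is precisely the one you describe --- write \eqref{eq:311} for $(K,e)$ and for $(K_e,e)$, weight by $|K|/p_K$ and $|K_e|/p_{K_e}$, add, and let the interface values cancel via the consistency property \eqref{eq:consistF}, the conservation property \eqref{eq:conservF}, and the antisymmetry $\omega_{K_e,e}=-\omega_{K,e}$. All of those individual steps are correct, including your observation that the sign bookkeeping must be carried by $\omega_{K,e}$ rather than by $f_{K,e}$ in the conventions \eqref{eq:consis22}--\eqref{eq:monoto22}.

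However, your final identification does not hold, and this is a genuine gap in the write-up. After the $\Phi$-terms cancel, what your computation actually leaves on the left-hand side is
$-\tau\big(F(v^n_K)\,\omega_{K,e}+F(v^n_{K_e})\,\omega_{K_e,e}\big)=\tau\,\omega_{K,e}\big(F(v^n_{K_e})-F(v^n_K)\big)$,
whereas the term printed in the lemma is
$\tau\big(F(v^n_{K_e})\,\omega_{K_e,e}-F(v^n_K)\,\omega_{K,e}\big)=-\tau\,\omega_{K,e}\big(F(v^n_{K_e})+F(v^n_K)\big)$.
These differ by $2\tau\,\omega_{K_e,e}F(v^n_{K_e})$, which is not zero in general, so the claim that your expression ``matches'' the displayed one is false. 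You should not assert a match here: either you have proved a statement different from the one printed, or you must supply an additional argument. In fact the form you derived is the one consistent with the rest of the paper --- the global inequality \eqref{eq:331} is obtained from \eqref{eq:311} by exactly this pairing, and its flux term $F(v^n_K)\,\del_j\big((1-2M/r)(x^j/r)\phi\big)$ corresponds to $+\sum_e\tau|e|\,\omega_{K,e}F(v^n_K)\phi^n_e$, i.e.\ to $-\tau\,\omega_{K,e}F(v^n_K)$ per cell and per edge on the left of a ``$\leq$'' inequality --- so the printed lemma almost certainly carries a sign slip (the first $\omega_{K_e,e}$ should read $\omega_{K,e}$, equivalently the bracket should be $-\big(F(v^n_{K_e})\omega_{K_e,e}+F(v^n_K)\omega_{K,e}\big)$). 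A complete answer needs to either prove the statement as written or explicitly record this discrepancy and the corrected form; silently equating the two is the error to fix.
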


The global entropy inequalities read as follows. 

\begin{lemma}
Let $(U, F)$ be a convex entropy pair and let $\phi= \phi(t, x)\in C_c ([0, T)\times \Omega)$ be a test function. For each element $K$ and each face $e\in \del K$, set
\begin{subequations}
\be 
\phi^n_e
:= 
{1 \over {\tau |e|}}\int_{t_n}^{t_{n+1}} \int_e \phi(t, x) \, dSdt, \quad \quad \widehat{\phi}^n_K:=\sum_{e\in \del K}{|e| \over p_K}\phi^n_e,
\ee
and
 \be 
\widehat{\del_t \phi}_K^n
:= 
{1 \over \tau}(\widehat{\phi}^n_K- \widehat{\phi}^{n-1}_K).
\ee
\end{subequations}
Then one has 
\bel{eq:331}
\aligned
& \sum_{n=1}^\infty \sum_{K \in \Tcal_h}\int_{t_n}^{t_{n+1}}\int_K 
                                                     \Bigg( U(v^n_K)\widehat{\del_t \phi}_K^n+ F(v^n_K) \del_j \Big( \big(1 - 2 M / r \big)(x^j/r)\phi(t, x) \Big)
                                                     \\
                                        &\hskip3cm    + (2 M / r_K^2) U'(v^n_K) \Big( f(v^n_K) + h(v^n_K) \Big) \widehat{\phi}_K^n \Bigg) \, dxdt
+ \sum_{K \in \Tcal_h}\int_K U(v^0_K)\widehat{\phi}^0_K \, dx 
\\
&\geq 
\sum^{+ \infty}_{n=0}\sum_{K \in \Tcal_h \atop e \in \del K} 
\hskip-.2cm \Bigg( {|K| |e| \over p_K}\phi^n_e R^{n+1}_{K, e}
+
 \int_{t_n}^{t_{n+1}}\int_e F(v_K^n) \big( \phi_e^n - \phi(t, x) \big) \Big(1 - 2 M / r\Big) \big(x / r) \cdot n_{K, e} \, dSdt \Bigg).
\endaligned
\ee 
\end{lemma}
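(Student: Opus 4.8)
The plan is to derive \eqref{eq:331} from the per-face discrete entropy inequality \eqref{eq:311} by weighting it with the edge--time average of the test function, summing over all cells, faces and time levels, and then carrying out an Abel summation by parts in the time index together with a cell-wise divergence reorganization in space. Throughout I may assume $\phi \geq 0$, so that multiplication by the nonnegative weight $\frac{|e||K|}{p_K}\phi^n_e$ preserves the direction of the inequality; this is also the only case needed for the entropy formulation of Definition~\ref{def-entropy}. Multiplying \eqref{eq:311} by this weight and summing over $K \in \Tcal_h$, $e \in \del K$ and $n \geq 0$ turns the $\leq$ in \eqref{eq:311} into the $\geq$ form of \eqref{eq:331} once the left-hand side is rearranged.

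First I would treat the time-difference terms. The quantities $U(v^{n+1}_{K,e})$ must be returned to the cell value: using the convex-decomposition identity \eqref{eq:conv-decom3} together with $\widehat\phi^n_K=\sum_{e\in\del K}\frac{|e|}{p_K}\phi^n_e$, the aggregated weight converts $\sum_e \frac{|e|}{p_K}U(v^{n+1}_{K,e})$ into $U(v^{n+1}_K)$ up to a Jensen-type gap that I absorb into the remainder. I would then perform a summation by parts in $n$ on the resulting telescoping difference of $U(v^n_K)$: the discrete time difference is transferred off $U(v^n_K)$ and onto the aggregated test weight, producing the term $U(v^n_K)\widehat{\del_t\phi}^n_K$ on the left (recall $\int_{t_n}^{t_{n+1}}=\tau$ and $\widehat{\del_t\phi}^n_K=\frac1\tau(\widehat\phi^n_K-\widehat\phi^{n-1}_K)$). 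The boundary of the summation at $n=0$ yields the initial-layer contribution $\sum_K\int_K U(v^0_K)\widehat\phi^0_K\,dx$, while the boundary at $n\to+\infty$ vanishes since $\phi$ is compactly supported.

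Next I handle the numerical flux. For each interior face $e$ shared by $K$ and its neighbour $K_e$, I combine the contributions $(K,e)$ and $(K_e,e)$ using the conservation property \eqref{eq:conservF} and the sign relation $\omega_{K_e,e}=-\omega_{K,e}$ inherited from $n_{K_e,e}=-n_{K,e}$, and replace the numerical flux by the consistent flux via \eqref{eq:consistF}; this is precisely the passage already recorded in the preceding (local entropy) lemma and produces the consistent surface contribution $\tau|e|\omega_{K,e}F(v^n_K)\phi^n_e$. Adding and subtracting the pointwise value of $\phi$ splits this into a discrete surface integral of $F(v^n_K)\phi(t,x)$ over $\del K$ plus the consistency error $\int_{t_n}^{t_{n+1}}\int_e F(v^n_K)(\phi^n_e-\phi(t,x))(1-2M/r)(x/r)\cdot n_{K,e}\,dSdt$, which I collect on the right-hand side of \eqref{eq:331}. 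Applying the divergence theorem on the fixed cell $K$ in the form $\sum_{e\in\del K}\int_e (1-2M/r)(x/r)\cdot n_{K,e}\,F(v^n_K)\phi\,dS=\int_K \del_j\big((1-2M/r)(x^j/r)\phi\big)F(v^n_K)\,dx$ then yields the volume term $F(v^n_K)\del_j\big((1-2M/r)(x^j/r)\phi\big)$ on the left.

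Finally, the source terms reorganize directly: since $\theta_K=2M/r_K^2$ is $e$-independent, summing the per-face source contributions with weight $\phi^n_e$ reassembles the factor $\widehat\phi^n_K$, giving exactly $\int_{t_n}^{t_{n+1}}\int_K (2M/r_K^2)U'(v^n_K)\big(f(v^n_K)+h(v^n_K)\big)\widehat\phi^n_K\,dxdt$. Collecting the Jensen gap, the remainders $R^{n+1}_{K,e}$ and the flux consistency errors on the right then produces \eqref{eq:331}. The main obstacle is the flux reorganization of the previous paragraph: correctly pairing the two-sided numerical-flux contributions through conservation and consistency while tracking the orientation of $\omega_{K,e}$, and then converting the discrete surface sum into the continuous divergence term via the cell-wise divergence theorem, all the while isolating the $(\phi^n_e-\phi)$ consistency error so that it lands in the remainder. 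The time summation by parts is comparatively routine, but still requires care to generate $\widehat{\del_t\phi}^n_K$ with the correct $n=0$ initial layer.
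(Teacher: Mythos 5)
The paper does not actually prove this lemma---it defers to \cite{ALO}---and your route (weight the discrete entropy inequality \eqref{eq:311} by $\tfrac{|e||K|}{p_K}\phi^n_e\ge 0$, sum over $K$, $e$, $n$, Abel-sum in time to create $\widehat{\del_t\phi}^n_K$ and the initial layer, pair the numerical fluxes across each interior face via conservation and $\omega_{K_e,e}=-\omega_{K,e}$, then apply the cell-wise divergence theorem and isolate the $(\phi^n_e-\phi)$ consistency error) is precisely the argument of \cite{ALO} that the authors intend. The time summation by parts, the source reassembly into $\widehat\phi^n_K$, and the flux pairing are all correctly described; for the flux cancellation you should add a word on the horizon faces, where the pairing argument does not apply but the contribution drops because $\omega_{K,e}$ vanishes at $r=2M$.

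The one genuine gap is the step you call the ``Jensen-type gap.'' After summation you hold $\sum_{e\in\del K}\tfrac{|e|}{p_K}\phi^n_e\,U(v^{n+1}_{K,e})$, and to produce the term $\widehat\phi^n_K\,U(v^{n+1}_K)$ of \eqref{eq:331} you need $\sum_e\tfrac{|e|}{p_K}\phi^n_e\,U(v^{n+1}_{K,e})\ \geq\ \widehat\phi^n_K\,U(v^{n+1}_K)$. Jensen applied to the convex decomposition \eqref{eq:conv-decom3} gives this only for the \emph{unweighted} average; with the weights $\phi^n_e$ one gets
$\sum_e\tfrac{|e|}{p_K}\phi^n_e U(v^{n+1}_{K,e})\geq\widehat\phi^n_K U(v^{n+1}_K)+\sum_e\tfrac{|e|}{p_K}\big(\phi^n_e-\widehat\phi^n_K\big)\big(U(v^{n+1}_{K,e})-U(v^{n+1}_K)\big)$,
and the last sum has no sign. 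You cannot simply ``absorb it into the remainder'': the right-hand side of \eqref{eq:331} is fully explicit (the $R^{n+1}_{K,e}$ terms plus the flux-consistency integrals) and has no slot for it. It must be carried as an additional error term---of size $O(h)$ times the square root of the entropy dissipation, hence controlled via \eqref{eq:312} and Cauchy--Schwarz, which is how \cite{ALO} treats it---or the inequality must be stated up to such a term. Two smaller bookkeeping points of the same nature: there is also a quadrature error between $|e|\,\omega_{K,e}$ (a midpoint value) and $\int_e(1-2M/r)(x/r)\cdot n_{K,e}\,dS$, and if you track signs carefully the two error terms your derivation places on the right of \eqref{eq:331} come out with the opposite sign to the one printed. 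None of this affects the convergence proof, since all of these terms vanish as $h\to 0$, but a proof of the lemma exactly as stated has to confront them explicitly rather than wave them into the remainder.
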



\subsection{Measure-valued solutions and strong convergence}

We are now in a position to  complete our proof of Theorem \ref{thm:main}. Based on the entropy inequalities we have established, we are able to pass the limit in the inequality \eqref{eq:331} as $h\to 0$. Then we associate with a subsequence of $v_h$ (which is uniformly bounded in $[0, T) \times \Omega \to \RR$ for fixed $T$) a Young measure $\nu : [0, T) \times \Omega \to \Prob(\RR)$, which is a family of probability measures in $\RR$ parametrized by $(t, x)\in [0, T) \times \Omega$. We then show that the Young measure, describing all the weak-star limits of $v_h$, is an entropy measure-valued solution in the sense of DiPerna. The strong convergence result follows from the DiPerna's uniqueness theorem, see \cite{DiPerna}.

The Young measure allows us to write, for every continuous function $a : \RR \to \RR$, 
\bel{eq:Young-m}
a(v_h) \rightharpoonup \langle \nu, a \rangle \quad \text{ as } h \to 0,
\ee
in the $L^\infty$ weak-star topology. As presented in \cite{ALO}, it suffices to show that $\nu$ is an entropy measure-valued solution to our balance law, in order to imply that $\nu_{t, x}$ reduce to a Dirac mass $\delta_{v(t, x)}$ if this is true at the initial time $t=0$. The convergence in \eqref{eq:Young-m} then holds in a strong sense and $v_h$ converges to the entropy solution $v$ to the Cauchy problem.

\begin{lemma}
\label{lem:entropy-int-m}
Let $\nu: [0, T) \times \Omega \to \Prob(\RR)$ be the Young measure associated with the sequence $v_h$. Then for every convex entropy pair $(U, F)$ one has
\bel{eq:entropy-m}
\aligned
0 
&\leq
\int_{[0, T)}\int_\Omega  \Bigg( \langle\nu_{t, x}, U \rangle \del_t \phi(t, x) + \langle \nu_{t, x}, F \rangle \del_j \Big( \big( 1 - {2M \over r} \big) {x^j \over r} \phi(t, x)  \Big) 
\\
& \hskip2.cm 
+ \langle \nu_{t, x}, U' \Big( f + h \Big) \rangle {2M \over r^2} \phi(t, x) \Bigg) \, dxdt 
+ \int_\Omega  U\big( v_0(x) \big)\phi(0, x)
\endaligned
\ee
for all non-negative test functions $\phi: [0, T) \times \Omega \to \RR_+$.
\end{lemma}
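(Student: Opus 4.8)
The plan is to pass to the limit $h \to 0$ in the global discrete entropy inequality \eqref{eq:331}, and to show that every term converges to its continuous counterpart appearing in \eqref{eq:entropy-m}, with the right-hand side error terms vanishing in the limit. The key device is the Young measure representation \eqref{eq:Young-m}: for any continuous $a$, one has $a(v_h) \rightharpoonup \langle \nu, a \rangle$ in $L^\infty$ weak-star. Applying this with $a = U$, $a = F$, and $a = U'\,(f + h)$ converts the nonlinear averages of the approximate solution into the linear (in the measure) quantities $\langle \nu_{t,x}, U\rangle$, $\langle \nu_{t,x}, F\rangle$, and $\langle \nu_{t,x}, U'(f+h)\rangle$ that appear in the target inequality. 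So the first step is simply to rewrite \eqref{eq:331} identifying each nonlinear term as a weak-star limit candidate.

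The second step is to reconcile the discrete difference-quotient operators with the continuous derivatives of the test function $\phi$. I would show that the discrete time-difference $\widehat{\del_t \phi}_K^n$ converges, in the appropriate weak sense against the piecewise-constant data, to $\del_t \phi$, and likewise that the averaged face-values $\widehat\phi^n_K$ and $\phi^n_e$ converge to $\phi$ itself; this is where the regularity of $\phi \in C_c([0,T)\times\Omega)$ and standard consistency estimates for averages over faces and cells (of size $O(h)$) are used. The cell-center evaluations $\theta_K = 2M/r_K^2$ converge to the smooth coefficient $2M/r^2$, and the geometric weight $(1-2M/r)(x^j/r)$ in the flux term is handled by recognizing the discrete spatial flux sum as a consistent approximation of $\del_j\big((1-2M/r)(x^j/r)\phi\big)$. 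The boundary/initial term $\sum_K \int_K U(v^0_K)\widehat\phi^0_K\,dx$ converges to $\int_\Omega U(v_0)\phi(0,\cdot)\,dx$ by the assumption $v_0 \in L^1(\Omega)$ and continuity of $U$.

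The main obstacle, and the crux of the argument, is to show that the right-hand side of \eqref{eq:331} tends to zero as $h \to 0$. This splits into two pieces. The first is the entropy-dissipation remainder $\sum \frac{|K||e|}{p_K}\phi^n_e R^{n+1}_{K,e}$, where $R^{n+1}_{K,e} = U(v^{n+1}_{K,e}) - U(\widetilde v^{n+1}_{K,e})$; controlling this requires summing the entropy-dissipation estimate \eqref{eq:312} in time to obtain a global bound on $\sum \frac{|e||K|}{p_K}|v^{n+1}_{K,e} - v^{n+1}_K|^2$, and then using the CFL condition \eqref{eq:CFL1} together with the timestep scaling $h^2/\tau \to 0$ from \eqref{eq:time-step} to show the remainder is negligible — this is the standard but delicate $O(h^2/\tau)$-type estimate. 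The second piece is the flux-consistency error $\int_e F(v^n_K)\big(\phi^n_e - \phi\big)(1-2M/r)(x/r)\cdot n_{K,e}\,dS\,dt$, which vanishes because $\phi^n_e$ is precisely the face-average of $\phi$, so $\phi^n_e - \phi = O(h)$ uniformly, and summation against the bounded flux and geometric factors gives a term of order $O(h)$.

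Once these limits are established, the inequality \eqref{eq:331} passes to \eqref{eq:entropy-m} for every convex entropy pair and every non-negative $\phi$, which is exactly the assertion that $\nu$ is an entropy measure-valued solution. I would remark that the uniform $L^\infty$ bound $|v_h| \leq 1$ from Proposition \ref{p:maximum} guarantees that $\nu_{t,x}$ is supported in $[-1,1]$, so all the continuous functions $U$, $F$, $f$, $h$ are evaluated on a compact set and the weak-star convergences are legitimate; this bound is what makes the whole limiting procedure well-defined.
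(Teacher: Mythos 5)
Your proposal follows exactly the route the paper takes (and largely leaves to \cite{ALO}): pass to the limit in the global entropy inequality \eqref{eq:331} using the Young measure representation \eqref{eq:Young-m}, the uniform bound of Proposition \ref{p:maximum}, the dissipation estimate \eqref{eq:312} together with $h^2/\tau \to 0$ to kill the remainder $R^{n+1}_{K,e}$, and consistency of the averaged test functions. The one step to tighten is the last error term on the right of \eqref{eq:331}: the crude bound $|\phi^n_e - \phi| = O(h)$ summed over all faces in the support of $\phi$ only yields $O(1)$, so you must exploit that $\phi^n_e$ is the \emph{exact} face--time average (hence $\phi^n_e - \phi$ integrates to zero on each face), which allows you to also subtract the face average of the geometric weight $\big(1 - 2M/r\big)(x/r)\cdot n_{K,e}$ and gain the extra factor of $h$ that makes the sum vanish.
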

 
For all convex entropy pairs, we thus have
\be 
\aligned
\del_t \langle\nu, U \rangle
+ \big( 1 - {2M \over r} \big) {x^j \over r} \del_j \langle \nu, F \rangle - {2M \over r^2} \langle \nu, U' \big( f + h\big) \rangle 
\leq 0, 
\endaligned
\ee 
and the proof of of Theorem \ref{thm:main} is completed. 


\section*{Acknowledgments} The authors were supported by the Innovative Training Network (ITN) entitled  ModCompShock under the grant 642768.


\end{document}